\documentclass[a4paper,11pt]{article}
\usepackage[margin=1in]{geometry}
\usepackage{amsmath,amsthm,amssymb}
 \usepackage{enumitem,color}
 \usepackage{hyperref}
\hypersetup{
 colorlinks=true,
 citecolor=blue,
 linkcolor=black,
 filecolor=magenta,
 urlcolor=cyan
}
\usepackage[capitalize]{cleveref}
\usepackage{xcolor}
\usepackage{graphics,graphicx}

\newtheorem{theorem}{Theorem}
\newtheorem{lemma}{Lemma}
\newtheorem{corollary}{Corollary}
\newtheorem{rlt}{Theorem}

\newtheorem{rll}{Lemma}

\newtheorem{problem}{Problem}
\newtheorem{claim}{}[lemma]
\newtheorem{claim1}{Claim}
\newcommand*\sq{\mathbin{\vcenter{\hbox{\rule{0.75ex}{1.0ex}}}}}

\begin{document}

\title{Vertical critical co-gem-free graphs}

\author{ Manoj Belavadi\thanks{Computer Science Unit, Indian Statistical
Institute, Chennai Centre, Chennai 600029, India. Supported by Anusandhan National Research Foundation (ANRF), Govt. of India, under the National Postdoctoral Fellowship (NPDF) program (No. PDF/2025/005208). ORCID: 0000-0002-3153-2339.} \and  T.~Karthick\thanks{ Computer Science Unit, Indian Statistical
Institute, Chennai Centre, Chennai 600029, India. Email: karthick@isichennai.res.in. ORCID: 0000-0002-5950-9093.   } }

\date{\today}
\maketitle

\begin{abstract}
Given a graph $G$, let $\chi(G)$ denote the chromatic number of $G$. For $k\in \mathbb{N}$, a graph $G$ is \emph{$k$-vertex-critical} if $\chi(G)=k$ and $\chi(G-v)< k$ for all $v\in V(G)$.   A recent problem of Beaton and Cameron [TCS 1042 (2025) 115234] asks   for which graphs $H$ of order five, are there finitely many $k$-vertex-critical (co-gem, $H$)-free graphs, for all $k\in \mathbb{N}$? Here we identify three distinct graphs on five vertices that yield an affirmative  answer to this problem. More precisely,  we show that  for each $k\in \mathbb{N}$, there are finitely many $k$-vertex-critical (co-gem, $H$)-free graphs, where $H\in \{$paraglider, dart, house$\}$, by analysing the structure of such graphs.   Our results together with a result of Couturier et al. [Algorithmica 71:1 (2015) 21--35] imply that for each $k\in \mathbb{N}$, there is a polynomial-time certifying algorithm for  $k$-{\sc Coloring} of (co-gem, $H$)-free graphs, where $H\in \{$paraglider, dart, house$\}$.
\end{abstract}

\section{Introduction}

    All graphs considered in this paper are  simple and undirected. For missing notation and terminologies, we refer to \cite{BM-book}.  For a graph $G$, we use $V(G)$ and $E(G)$ to denote the vertex-set and the edge-set of $G$, respectively.  We say that a graph $G$ \emph{contains} a graph $H$, if $H$ is an induced subgraph of $G$.  Given a graph $H$, a graph  is \emph{$H$-free} if it does not contain $H$. For a given set of graphs $\{H_1,H_2,\ldots, H_t\}$, a graph is \emph{$(H_1,H_2,\ldots, H_t)$-free} if it is $H_i$-free for all $i\in \{1,2,\ldots, t\}$. Also for a given set of graphs $\mathcal{H}$, a graph is {\em $\mathcal{H}$-free} if it does not contain any graph from $\mathcal{H}$.   A \emph{stable set} (resp., \emph{clique}) in a graph  $G$ is a set of mutually nonadjacent (resp., adjacent) vertices in $G$.

     A $k$-\emph{coloring} of a graph $G$ is a function  $f: V(G) \rightarrow \{1,2,\ldots, k\}$ such that $f(u)\neq f(v)$, for every edge $uv\in E(G)$.  In other words, a $k$-coloring of $G$ is a partition of $V(G)$ into $k$ stable sets.  The smallest integer $k$ for which $G$ admits a $k$-coloring is called the \emph{chromatic number} of $G$, and is denoted by $\chi(G)$.   A graph $G$ is $k$-colorable if $\chi(G)\leq k$, and is \emph{$k$-chromatic} if $\chi(G)=k$.
     For $k\in \mathbb{N}$, a graph $G$ is \emph{$k$-vertex-critical} if $\chi(G)=k$ and $\chi(G-v)< k$ for all $v\in V(G)$.
A graph $G$ is {\em perfect} if for every  induced subgraph  $H$ of $G$, $\chi(H)$ equals the size of a largest clique of $H$.

For a fixed integer $k$, the  $k$-{\sc Coloring} problem asks whether the given graph is $k$-colorable. It  is well-known to be \textsf{NP}-complete. This motivated several researchers to study $k$-{\sc Coloring} for restricted graph classes, and we refer to \cite{survey} for an extensive survey.
   An algorithm is said to be \emph{certifying} if it returns a certificate of correctness, for each output, that can be verified in polynomial time. A certifying algorithm for  $k$-{\sc Coloring}  can output a $k$-coloring of $G$ when it is an `yes' instance or output a ($k$+1)-vertex-critical induced subgraph of $G$ when it is a `no' instance.  The polynomial-time algorithm  for  $k$-{\sc Coloring} of   $P_5$-free graphs given in \cite{HoaKam2010}, and the polynomial-time algorithm for $k$-{\sc Coloring} of   ($P_5+\ell P_1$)-free graphs given in \cite{CGKP2015} are not certifying.  Interestingly, we have the following.

    \begin{rlt}[Folklore] \label{thm:finite-imp-P}
     Given a set of graphs $\cal{H}$, let $\cal{G}$ be  the class of $\cal{H}$-free graphs.  For a fixed $k\in \mathbb{N}$, if there are finitely many $(k+1)$-vertex-critical graphs in $\cal{G}$, then there is a polynomial-time  algorithm for $k$-{\sc Coloring} of $\cal{G}$.
    \end{rlt}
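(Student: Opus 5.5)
The plan is to reduce $k$-{\sc Coloring} on $\cal G$ to a bounded number of induced-subgraph tests. Let $\mathcal{F}=\{F_1,\ldots,F_t\}$ be the finite list of $(k+1)$-vertex-critical graphs in $\cal G$, and let $c=\max_i |V(F_i)|$. Both $t$ and $c$ depend only on $k$ and $\cal H$, so they are constants for the algorithm. The claim I will establish is that a graph $G\in\cal G$ is $k$-colorable if and only if $G$ contains none of $F_1,\ldots,F_t$ as an induced subgraph.

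For the claim, one direction is immediate. If $G$ contains some $F_i$, then $\chi(G)\ge\chi(F_i)=k+1$, since the chromatic number is monotone under taking induced subgraphs.

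For the other direction, suppose $\chi(G)\ge k+1$. I will show that $G$ has an induced $(k+1)$-vertex-critical subgraph, which must then be one of the $F_i$.
\begin{enumerate}[label=(\roman*)]
\item First I reduce to chromatic number exactly $k+1$. Delete vertices one at a time. Each deletion lowers $\chi$ by at most one, so some induced subgraph $G'$ has $\chi(G')=k+1$.
\item Next I make $G'$ critical. While some vertex $v$ has $\chi(G'-v)=k+1$, delete $v$. The process stops at an induced subgraph $G''$ with $\chi(G'')=k+1$ in which every vertex deletion lowers $\chi$. So $G''$ is $(k+1)$-vertex-critical.
\item Finally, $G''$ is an induced subgraph of $G$. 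Since $G$ is $\cal H$-free, so is $G''$, and hence $G''\in\cal G$. Therefore $G''\in\mathcal{F}$.
\end{enumerate}

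The algorithm then follows directly. For each $F_i$, enumerate all vertex subsets of $G$ of size $|V(F_i)|$ and all bijections to $V(F_i)$, and check whether the subset induces a copy of $F_i$. This takes $O(n^{c}\cdot c!\cdot c^2)$ time per graph, which is $O(n^c)$ for fixed $k$. The answer is ``yes'' exactly when no copy is found.

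I expect the main obstacle to be a subtlety in the definitions rather than in the argument. The theorem only promises a decision algorithm, and the argument above uses $\mathcal F$ purely as a fixed finite list that is hard-coded into the algorithm. So I need not compute $\mathcal F$ from $k$. This gives existence of a polynomial-time algorithm for each fixed $k$, not a uniform one.

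If a certifying version or an explicit coloring is wanted, I would recover a $k$-coloring by self-reducibility. Repeatedly add an edge between two nonadjacent vertices, or identify them, whenever the result stays $k$-colorable, until a complete multipartite graph remains. However, these modified graphs may leave $\cal G$, so the membership test would not apply to them. For that reason I would state only the decision version, as the theorem does.
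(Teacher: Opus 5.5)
Your proof is correct and is the standard folklore argument the paper relies on; the paper states this theorem without proof. Since $\mathcal{G}$ is hereditary, a graph in $\mathcal{G}$ is $k$-colorable if and only if it has no induced copy of one of the finitely many $(k+1)$-vertex-critical graphs in $\mathcal{G}$, and your brute-force check in $O(n^{c})$ time tests this. Your caveat about the certifying version is also consistent with the paper: it obtains colorings for ``yes'' instances from the polynomial-time algorithm of Couturier et al., and uses the finite list only to exhibit a $(k+1)$-vertex-critical induced subgraph for ``no'' instances.
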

   Using \cref{thm:finite-imp-P}, for a fixed  $k\in \mathbb{N}$, one can obtain a polynomial-time  certifying  algorithm for   $k$-{\sc Coloring} of  some restricted graph classes.  Thus  we have the following  problem.

    \begin{problem}\label{prob-fin}
      Given a set of graphs $\cal{H}$, let $\cal{G}$ be  the class of $\cal{H}$-free graphs.  For a fixed $k\in \mathbb{N}$, are there   finitely many  $k$-vertex-critical graphs in $\mathcal{G}$?
    \end{problem}

 \cref{prob-fin} has gathered much attention over the past few decades and remains of current interest.
 Recall that the class of co-graphs (or the class of $P_4$-free graphs) are perfect, and hence  the only $k$-vertex-critical co-graph is the complete graph on $k$ vertices. For a graph $H$ and for $k\le 4$, \cref{prob-fin} has been completely solved for $H$-free graphs by Chudnovsky et  al. \cite{CGSZ2020,CGSZ2021}. They proved that there are finitely many $4$-vertex-critical $H$-free graphs if and only if $H$ is an induced subgraph of one of the graphs in \{2$P_3$, $P_6$, $P_4$+$r P_1$\},  where $r\ge  0$.    It is also known that  there are finitely many $k$-vertex-critical ($P_3+rP_1$)-free graphs for all $k\geq 1$ and $r\geq 0$ \cite{ACHS2024}, but there are infinitely many $k$-vertex-critical $2P_2$-free graphs for all $k\geq 5$, and hence there are infinitely many $k$-vertex-critical $P_5$-free graphs for all $k\geq 5$  \cite{HoaMoo2015}.
         This prompted several researchers to study  $k$-vertex critical ($P_t$, $H$)-free graphs for $t\geq 5$ and various $H$; see \cite{BH2026,CGHS2021,CamHoa2024,DhaHam2017,HoaMoo2015,KP2019,XJGH2025} and the references therein.
    In \cite{GS2018}, Goedgebeur and Schaudt gave an exhaustive algorithm to generate all $k$-vertex-critical $\mathcal{H}$-free graphs, where $\mathcal{H}$ is a set of graphs, and recently several researchers have used this algorithm to generate all $k$-vertex-critical $\mathcal{H}$-free graphs, for some specific $\mathcal{H}$ and when $k$ is small  (see \cite{XJGH2025} for an instance). On the other hand, for $k\ge 5$ and given a graph $H$, the only open case of \cref{prob-fin} for $H$-free graphs is when $H$ is $P_4$+$\ell P_1$ where $\ell\ge  1$, and is open even when $\ell=1$.  More precisely, we have the following problem.

    \begin{problem}[\cite{CHS2022}]\label{Prob-P4P1}For which values of $k \geq 5$, are there   finitely many $k$-vertex-critical ($P_4 + P_1$)-free graphs?
\end{problem}

The graph $P_4+P_1$ is  called the {\it co-gem}, and we refer to Figure~\ref{fig:named-graphs} for some named graphs on five vertices which are used in this paper. While \cref{Prob-P4P1} seems to be difficult in general,  Beaton and Cameron \cite{BC2025, BC2026}   studied  vertex-critical (co-gem, $H$)-free graphs, where $H$ is a graph on at most five vertices. If $H$ is any graph on at most four vertices, then they proved that for all $k\in \mathbb{N}$, there are finitely many $k$-vertex-critical (co-gem, $H$)-free graphs, and  they posed the following problem.
     \begin{problem}[\cite{BC2025}]\label{prob-cogem}
        For which  graphs $H$ of order five, are there finitely many $k$-vertex-critical (co-gem, $H$)-free graphs, for all $k\in \mathbb{N}$?
    \end{problem}

   \cref{prob-cogem} is resolved in the affirmative when $H\in\{$gem, co-dart, chair, banner$\}$. Indeed,  Abuadas et al.  \cite{ACHS2024} showed that for all $k\in \mathbb{N}$, there are  finitely many $k$-vertex-critical (co-gem, gem)-free graphs.   Beaton and Cameron \cite{BC2025,BC2026}    proved that for all $k\in \mathbb{N}$, there are finitely many $k$-vertex-critical (co-gem, $H$)-free graphs when $H\in \{$co-dart, chair$\}$.  However,  \cref{prob-cogem} is wide open for other graphs $H$ of order five.

   In  this paper, we identify three distinct graphs on five vertices that yield an affirmative  answer to \cref{prob-cogem}. More precisely,  we show that  for each $k\in \mathbb{N}$, there are finitely many $k$-vertex-critical (co-gem, $H$)-free graphs, where $H\in \{$paraglider, dart, house$\}$, by analysing the structure of such graphs. Our results together with a result of Couturier et al. \cite{CGKP2015} which states that $k$-{\sc Coloring}  can be solved in polynomial-time for co-gem-free graphs imply that for each $k\in \mathbb{N}$, there is a polynomial-time certifying algorithm for $k$-{\sc Coloring} of (co-gem, $H$)-free graphs, where $H\in \{$paraglider, dart, house$\}$.

\begin{figure}
\centering
 \includegraphics{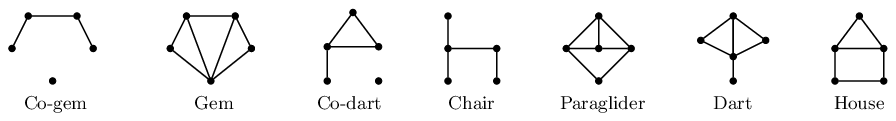}
 \caption{Some named graphs on five vertices.}\label{fig:named-graphs}
\end{figure}

\section{Preliminaries}
We use [$k$] to denote the set $\{1,2,\ldots,k\}$.
We use $P_n$ to denote  the chordless path on $n$ vertices, and  $K_n$  to denote the complete graph on $n$ vertices. For a graph $G$, co-$G$ (or $\overline{G}$) denotes the complement graph of $G$. A \emph{component} of a graph $G$ is a maximal connected induced subgraph of $G$. A graph $G$ is \emph{co-connected} if co-$G$ is connected. An \emph{anti-component} of a graph $G$ is a component of co-$G$. For $S\subset V(G)$, we use $G[S]$ to denote the subgraph induced by the set $S$, and  we use $G-S$ to denote the induced subgraph $G[V(G)\setminus S]$. For a vertex $v\in V(G)$, we write $G-v$ instead of $G-\{v\}$.
A \emph{clique cutset} of
a graph $G$ is a clique $Q$ in $G$ such that $G-Q$ has more components than $G$.

For   $v\in V(G)$,  a {\em neighbour} (resp., {\em nonneighbour})  of $v$ is a vertex which is adjacent (resp., nonadjacent) to $v$, and  $N(v)$ is the set of neighbours of $v$ in $G$.  For a subset $S\subseteq V(G)$, we let $N(S):= \{u\in V(G)\setminus S\mid u \text{ has a neighbour in } S\}$. Two nonadjacent vertices $u$ and $v$ in a graph $G$ are said to be \emph{comparable} if $N(u)\subseteq N(v)$ or $N(v)\subseteq N(u)$. For $S\subseteq V(G)$ and for two vertices $u, v\in V(G)\setminus S$, we say that $u$ and $v$ have the {\em same set of neighbours in $S$} if $N(u)\cap S = N(v)\cap S$.

Let  $v\in V(G)$ and let $S\subseteq V(G)\setminus \{v\}$. We say that \emph{$v$ is complete} (resp., \emph{anticomplete}) to $S$ if $v$ is adjacent to all (resp., none of) the vertices in $S$. We say that $v$ is \emph{mixed} on $S$, if $v$ has a neighbour and a nonneighbour in $S$.  Given a graph $G$ and two disjoint subsets $S_1$ and $S_2$ of $V(G)$, we say that $S_1$ is \emph{complete}  (resp., \emph{anticomplete}) to $S_2$ if each vertex in $S_1$ is  complete (resp., anticomplete) to $S_2$.

For an integer $p\geq 3$, we use $C_p$ to denote  the chordless cycle on $p$ vertices.
 A \emph{hole} is the graph $C_p$ for some $p\ge 5$. An \emph{odd hole} (resp., \emph{even hole}) is a hole with odd  (resp., even) number of vertices. For two vertex  disjoint graphs $G$ and $H$,  their \emph{union} is the graph with vertex-set $V(G)\cup V(H)$ and edge-set $E(G)\cup E(H)$, and is denoted by $G$+$H$. For  $r\in \mathbb{N}$, we use $rG$ to denote the graph consisting of the disjoint union of $r$ copies of $G$.

A \emph{module} $S$ of a graph $G$ is a subset of vertices such that every vertex in $V(G)\setminus S$ is either complete to $S$ or anticomplete to $S$. A \emph{homogeneous set}  is a module $S$ with $1 < |S| < |V(G)|$. A graph is   \emph{prime} if it does not contain a homogeneous set. Note that a prime graph with at least three vertices is connected and co-connected.

Let $G$ and $H$ be vertex-disjoint graphs, and let $v\in V(G)$. The graph obtained by \emph{substituting $H$ for $v$} has vertex-set $(V(G)\setminus \{v\})\cup V(H)$ and edge-set $E(G-v)\cup E(H)\cup \{ux\mid u\in V(H), x\in N(v)\}$. A \emph{blowup} of a graph $G$ is the graph obtained from $G$ by recursively substituting a nonempty complete graph for each vertex in $G$. A \emph{$k$-vertex-critical blowup} of a graph $G$  is a blowup of $G$ which is $k$-vertex-critical.

\medskip

We will also use the following well-known results.

\begin{rll}[Folklore]\label{lem:critical-clqcut-comp}
  A $k$-vertex-critical graph  does not contain a clique cutset or a pair of comparable vertices.
\end{rll}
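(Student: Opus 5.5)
We prove the two assertions of \cref{lem:critical-clqcut-comp} separately. Both arguments take a $k$-coloring of a proper induced subgraph, which exists by vertex-criticality, and extend it to a $(k-1)$-coloring of $G$. Throughout, $G$ is $k$-vertex-critical, so $\chi(G)=k$ and $\chi(G-S)\le k-1$ for every nonempty $S\subseteq V(G)$. The second bound holds because $G-S$ is an induced subgraph of $G-v$ for any $v\in S$.

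\emph{Clique cutsets.} Suppose $Q$ is a clique cutset of $G$. A $k$-vertex-critical graph is connected: otherwise some component $C$ has $\chi(C)=k$, and deleting a vertex outside $C$ keeps the chromatic number $k$. So $G-Q$ is disconnected. Let $A$ be the vertex set of one component of $G-Q$, and let $B$ be the union of the remaining components; both are nonempty. Set $G_1=G[A\cup Q]$ and $G_2=G[B\cup Q]$. These are proper induced subgraphs of $G$, since $G_1$ misses $B$ and $G_2$ misses $A$, so each has a $(k-1)$-coloring.
\begin{itemize}
\item In each of these colorings, the vertices of $Q$ receive pairwise distinct colors, because $Q$ is a clique.
\item So we may permute the colors of the coloring of $G_2$ so that it agrees with the coloring of $G_1$ on $Q$.
\item Every edge of $G$ lies inside $A\cup Q$ or inside $B\cup Q$, since $A$ and $B$ lie in different parts of $G-Q$ and there are no $A$--$B$ edges.
\end{itemize}
Hence the union of the two colorings is a proper $(k-1)$-coloring of $G$, contradicting $\chi(G)=k$.

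\emph{Comparable vertices.} Suppose $u,v$ are nonadjacent and $N(u)\subseteq N(v)$. Take a $(k-1)$-coloring $f$ of $G-u$ and give $u$ the color $f(v)$. This is proper: every neighbour $w$ of $u$ is also a neighbour of $v$, so $f(w)\ne f(v)$. Also $u$ and $v$ are nonadjacent, so sharing a color is allowed. This again gives a $(k-1)$-coloring of $G$, a contradiction.

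There is no real obstacle here. The only step needing care is justifying that $G-Q$ genuinely splits into two nonempty sides, which uses the connectivity of critical graphs noted above.
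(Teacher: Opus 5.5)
Your proof is correct and complete. The paper gives no proof of this lemma; it is quoted as folklore. Your two arguments are exactly the standard folklore proofs: permute the colors of a $(k-1)$-coloring of $G[B\cup Q]$ to agree on the clique $Q$ with one of $G[A\cup Q]$ and glue, and give the dominated vertex $u$ the color $f(v)$ in a $(k-1)$-coloring $f$ of $G-u$. You also correctly prove that $k$-vertex-critical graphs are connected. Under the paper's definition of a clique cutset (``$G-Q$ has more components than $G$''), this is what guarantees that both $A$ and $B$ are nonempty.

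One slip to fix: your opening sentence says you take a $k$-coloring of a proper induced subgraph. It should be a $(k-1)$-coloring, which is what you actually use in the body.
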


\begin{rlt}[Strong Perfect Graph Theorem  \cite{spgt}]\label{spgt}
A graph
 is perfect if and only if it does not contain an odd-hole or the complement graph of an odd-hole.
\end{rlt}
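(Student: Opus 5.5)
The forward direction is immediate, and it is where I would start. An odd hole $C_{2t+1}$ with $t\ge 2$ has clique number $2$ and chromatic number $3$. Its complement has clique number $t$, and its chromatic number is $t+1$: every colour class is a clique of $C_{2t+1}$ and so has at most two vertices, which forces at least $\lceil (2t+1)/2\rceil = t+1$ colours. Hence a graph containing either of these as an induced subgraph is not perfect.

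For the converse I would follow the approach of Chudnovsky, Robertson, Seymour and Thomas \cite{spgt}; I cannot expect to shortcut it. Call a graph \emph{Berge} if it contains no odd hole and no complement of an odd hole. Suppose, for a contradiction, that $G$ is a Berge graph that is not perfect and has the minimum number of vertices. Then $G$ is minimally imperfect. By Lovász's perfect graph theorem $\overline{G}$ is also minimally imperfect, and $\overline{G}$ is Berge too. So the whole argument may be applied to $G$ and $\overline{G}$ symmetrically.

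Step one is to check that the basic classes are perfect. These are bipartite graphs, line graphs of bipartite graphs (by König's edge-colouring theorem), their complements, and double split graphs. This step is classical.

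Step two is to show that a minimally imperfect graph admits none of the decompositions that appear in the structure theorem:
\begin{itemize}[nosep]
\item \emph{Proper 2-join} in $G$ or in $\overline{G}$. I would argue that one can replace one side of the 2-join by a short path gadget so that the resulting block is a smaller Berge graph. Colourings of the two blocks can then be glued along the join, giving an $\omega(G)$-colouring of $G$.
\item \emph{Balanced skew partition.} Here I would use the fact that a minimally imperfect graph has no skew partition at all in the "balanced" sense. This requires the parity arguments of Chudnovsky et al., extending Chvátal's star-cutset lemma.
\end{itemize}

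Step three, the heart of the proof and the main obstacle, is the decomposition theorem itself: every Berge graph is basic, or admits a proper 2-join in $G$ or $\overline{G}$, or admits a balanced skew partition. My plan would be to proceed by the type of the largest "obstructing" configuration the graph contains. First treat Berge graphs containing the line graph of a bipartite subdivision of $K_4$, then prisms and appropriate wheels, and finally graphs with none of these. In each case the goal is to grow the configuration to a maximal one and show that the attachments of the remaining vertices force either a 2-join or a balanced skew partition. This case analysis is very long, and it is where essentially all the difficulty lies.

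Combining the three steps, a minimum counterexample $G$ would have to be basic and hence perfect, which is a contradiction. Since this theorem is only being quoted here, in the paper I would simply invoke \cite{spgt} rather than reproduce this argument.
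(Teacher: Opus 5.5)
Your proposal matches the paper, which gives no proof of this theorem and simply quotes it from \cite{spgt}, and your argument for the easy direction is correct: an odd hole has $\chi=3>2=\omega$, and the complement of $C_{2t+1}$ has $\chi\ge t+1>t=\omega$. Your outline of the converse is loose in two places: the decomposition theorem of \cite{spgt} also has a proper homogeneous pair outcome (excluded in minimally imperfect graphs by Chv\'atal and Sbihi, and only later shown unnecessary by Chudnovsky), and the 2-join step is not a direct gluing of colourings of path-gadget blocks, since their clique numbers and the colours on the attachment sets need not match; it is the theorem of Cornu\'ejols and Cunningham, but since you, like the paper, end by invoking \cite{spgt}, nothing rests on these details.
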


\begin{rlt}[\cite{CGKP2015}] \label{kcol-ptime}
 For each $k\in \mathbb{N}$, $k$-{\sc Coloring}  can be solved in polynomial-time for co-gem-free graphs.
\end{rlt}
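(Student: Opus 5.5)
The statement just given is \cref{kcol-ptime}, the result of Couturier et al.\ \cite{CGKP2015}. It is an imported theorem, so the paper cites it rather than proves it. Still, here is how I would reconstruct a proof. I would use the standard reduction for $(P_5+\ell P_1)$-free graphs with $\ell=0$ replaced by the co-gem $P_4+P_1$. Fix $k$ and let $G$ be co-gem-free. Since $k$-colorability is closed under induced subgraphs, we first check for $K_{k+1}$ in time $O(n^{k+1})$; if $G$ contains one, we reject. Otherwise $G$ is $K_{k+1}$-free.

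The key structural step is the following. Either $G$ is $P_4$-free, hence perfect, and we decide $\chi(G)\le k$ via $\omega(G)\le k$. Or $G$ contains an induced $P_4$ on vertices $Q=\{q_1,\dots,q_4\}$. Co-gem-freeness then forces every vertex of $G-Q$ to have a neighbour in $Q$, so $Q$ is a dominating set of size four. We guess the colours of $Q$ in at most $k^4$ ways. For each guess, every other vertex $v$ gets a list $L(v)\subseteq[k]$ obtained by deleting the colours of its neighbours in $Q$, so $|L(v)|\le k-1$.

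Next we recurse, in the style of Hoàng et al.\ \cite{HoaKam2010}. We repeatedly pick an induced $P_4$ inside the set of vertices whose lists have the same size, guess its colouring, and propagate. Each round strictly shrinks lists on a dominating portion, because the co-gem-freeness argument applies inside any induced subgraph. After at most $k$ levels, each class of vertices with equal list type is $P_4$-free. The number of branches is bounded by a function of $k$ alone, so the running time is polynomial.

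The main obstacle is the final list-colouring step. We must decide list-colourability of a graph whose pieces are $P_4$-free, possibly with interactions between pieces. I would try to show that at the bottom level all lists have size at most $2$. This reduces the problem to 2-SAT, which is solvable in polynomial time. To get there, I would argue that each recursive guess removes at least one colour from every remaining vertex, since domination holds within the residual subgraph. If that fails because some residual set is $P_4$-free but not dominated, I would instead use the cograph structure: list colouring of cographs with bounded clique size admits a dynamic program over the cotree. Combining these pieces is the technical heart of \cite{CGKP2015}.
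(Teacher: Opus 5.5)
The paper does not prove this statement. It cites Couturier et al., who show that list $k$-colouring is polynomial on $(P_5+rP_1)$-free graphs for fixed $k$ and $r$. Since $P_4+P_1$ is an induced subgraph of $P_5+P_1$, co-gem-free graphs are covered by the case $r=1$ (not ``$\ell=0$''). Your reconstruction therefore has to stand on its own, and it has a genuine gap.

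\textbf{What works, with one fix.} You must group vertices by \emph{identical} lists, not by lists of equal size. Otherwise the colour guessed for the dominating neighbour $q$ need not belong to $L(v)$, and $v$ loses nothing. With classes $V_S=\{v : L(v)=S\}$ processed in decreasing order of $|S|$, one round of at most $k^4$ guesses on an induced $P_4$ in $V_S$ pushes every other vertex of $V_S$ to a proper subset of $S$. So after a number of rounds bounded in terms of $k$, you do reach an instance in which every $V_S$ with $|S|\ge 3$ induces a $P_4$-free graph.

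\textbf{The gap: the final step is missing, and both of your suggested ways to finish it fail.}

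First, the claim that each guess removes a colour from every remaining vertex ``since domination holds within the residual subgraph'' is false. A dominating $P_4$ exists only while $G[V_S]$ contains a $P_4$. A $P_4$-free class need not have any bounded dominating set; take $V_S$ to be a large stable set, or a large induced matching, with $|S|=3$. Dominating its components one by one forces independent branching over unboundedly many components, which is exponential. So you cannot bring all lists down to size $2$ and finish with 2-SAT this way.

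Second, the cotree fallback does not help. Dynamic programming over a cotree decides list colourability of a cograph in isolation. Your residual graph is not a cograph: the $P_4$-free classes are adjacent to one another, and to the vertices with lists of size at most $2$, in an essentially unrestricted way. So their colourings cannot be chosen independently, and there is no cotree for the whole instance.

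What you need, and do not supply, is an argument controlling these interactions so that polynomially many branches suffice. That argument is exactly what the cited algorithms have to provide; your proposal names it ``the technical heart'' but leaves it unproved.
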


 \section{Vertex-critical (co-gem, paraglider)-free graphs}
 In this section, we show that for each $k\in \mathbb{N}$, there are finitely many $k$-vertex-critical (co-gem, paraglider)-free graphs by analysing the structure of such graphs.  First we prove an useful decomposition theorem for the class of $(P_4$, paraglider$)$-free graphs. To do the same, we need the following.

 For a graph $G$ and an integer $\ell\ge 2$,   we say that a partition ($W_0$, $W_1$, $W_2$, \ldots, $W_{\ell}$) of $V(G)$ is a \emph{good $\ell$-partition} if it satisfies the following:
\begin{enumerate}[label= $(\textsf{R}\arabic*)$, leftmargin=1.25cm] \itemsep=0pt \topsep=0pt
   \item\label{p4p-3} $W_0$ is a clique which is complete to $W_1\cup W_2$. Further if $G$ is disconnected, then $W_0=\emptyset$.

         \item\label{p4p-2} $|W_1|\ge 2$. Further if $G$ is connected, then $|W_2|\ge 2$.

 \item\label{p4p-1} For all $i\in [\ell]$,  $W_i$ is a nonempty set that induces a component of $G-W_0$.
    \item\label{p4p-4} For all $i\in [\ell]$, $N(W_i)\cap W_0$ is complete to $W_i$.

    \item\label{p4p-5} For all $i\in [\ell-1]$, $N(W_i)\cap W_0 \supseteq N(W_{i+1})\cap W_0$.
\end{enumerate}

Also, for a graph $G$ and an integer $\ell\ge 2$, we say that a good $\ell$-partition ($W_0$, $W_1$, $W_2$, \ldots, $W_{\ell}$) of $V(G)$ is a \emph{nice $\ell$-partition} if  there  is an index $\alpha\in [\ell]$ such that $|W_t|\geq 2$ for $t\in [\alpha]$, and $|W_{t}|=1$, for $t\in [\ell]\setminus [\alpha]$.

  \begin{lemma}\label{lem1-p4parg}
    Let $G$ be a  $(P_4$, paraglider$)$-free graph. Suppose that $G$  contains a $2P_2$. Then there is an integer $\ell \geq 2$ for which $G$ admits a good $\ell$-partition. Moreover, if $\ell \geq 2$ is the least integer for which $G$ admits a good $\ell$-partition, then  $G$ admits a nice  $\ell$-partition.
\end{lemma}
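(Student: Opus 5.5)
The plan is to build the partition by repeatedly peeling off universal cliques, using two facts. First, every $P_4$-free graph on at least two vertices is disconnected or co-disconnected. Second, a $2P_2$ is co-connected, since its complement is $C_4$. Throughout I use the paraglider in the form "a $C_4$ together with a vertex adjacent to exactly three of its vertices". The key local claim is the following.

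\emph{Claim.} If $A\subseteq V(G)$ contains a $2P_2$, say with edges $ab$ and $cd$, and $x,y\notin A$ are both complete to $A$, then $xy\in E(G)$.

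Indeed, if $x$ and $y$ were nonadjacent, then $x\!-\!a\!-\!y\!-\!c\!-\!x$ would be an induced $C_4$. The vertex $b$ is adjacent to $a$, $x$ and $y$ but not to $c$, so these five vertices induce a paraglider, a contradiction.

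\textbf{Existence.} If $G$ is disconnected, take $W_0=\emptyset$ and let $W_1,\dots,W_\ell$ be the components of $G$, ordered so that a component with an edge comes first. Such a component exists because $G$ contains a $2P_2$, so $|W_1|\ge 2$. Conditions \ref{p4p-4} and \ref{p4p-5} then hold trivially.

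If $G$ is connected, fix a $2P_2$ with edges $ab$ and $cd$ and descend as follows.
\begin{enumerate}
\item Set $G_0=G$. Since $G_0$ is connected and $P_4$-free, it is co-disconnected. The $2P_2$ is co-connected, so it lies inside a single anticomponent $A_0$ of $G_0$.
\item Let $Q_0=V(G_0)\setminus A_0$. This set is complete to $A_0$, and by the Claim it is a clique.
\item The graph $G[A_0]$ is co-connected, so it is disconnected. If $ab$ and $cd$ lie in different components of $G[A_0]$, stop.
\item Otherwise let $G_1$ be the component of $G[A_0]$ containing both edges. It is connected, so repeat the previous steps inside $G_1$. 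This gives $A_1$ and a clique $Q_1$, and the Claim again applies because $Q_1$ is complete to $A_1$.
\end{enumerate}
The graphs shrink at each step, so the process stops at some depth $s$.

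Put $W_0=Q_0\cup\dots\cup Q_s$. This is a clique, because $Q_j$ is complete to $A_j\supseteq Q_{j+1}\cup\dots\cup Q_s$. Let $W_1$ and $W_2$ be the components of $G[A_s]$ containing $ab$ and $cd$. The remaining sets $W_i$ are the other components of $G[A_s]$, together with the components of $G[A_{j}]\setminus V(G_{j+1})$ for $j<s$.

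Each vertex of $Q_j$ is complete to all of $A_j$ and anticomplete to everything outside $V(G_j)$. Hence a set $W_i$ that was split off at depth $j$ has $N(W_i)\cap W_0=Q_0\cup\dots\cup Q_j$, and $W_i$ is complete to this set. This gives \ref{p4p-4}. The neighbourhoods are nested by depth, so ordering the $W_i$ from deepest to shallowest gives \ref{p4p-5}. Finally, $W_0$ is complete to $W_1\cup W_2\subseteq A_s$, and $|W_1|,|W_2|\ge 2$, which gives \ref{p4p-3} and \ref{p4p-2}.

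\textbf{Niceness.} Now fix a good $\ell$-partition with $\ell$ least possible, and aim to show that the non-singleton parts can be made to come first. Suppose some $W_t$ is a singleton $\{w\}$ while a later part $W_{t'}$ has at least two vertices. Then by \ref{p4p-5}, $N(w)\cap W_0\supseteq N(W_{t'})\cap W_0$. By \ref{p4p-4}, $N(w)=N(w)\cap W_0$, because $w$ has no neighbours in other components of $G-W_0$.
\begin{itemize}
\item \emph{Equal neighbourhoods.} If $N(w)\cap W_0= N(W_{t'})\cap W_0$, then $w$ and $W_{t'}$ can simply be swapped without breaking \ref{p4p-5}.
\item \emph{Strict containment.} Otherwise, try to move $w$ into $W_0$. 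This should keep the partition good with $\ell-1$ parts, contradicting minimality. What must be checked is that $W_0\cup\{w\}$ remains a clique, and that $w$ is complete to $W_1\cup W_2$ and to every part lying inside $N(w)$. For this I will again use the paraglider and co-gem-type configurations, now built from $w$, a vertex of $W_0\setminus N(w)$, and an edge in $W_{t'}$ or in $W_1\cup W_2$. This exchange step will also need care when $t\le 2$.
\end{itemize}

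I expect this exchange step, namely proving that minimality forces all singleton parts to the end, to be the main obstacle. The existence part reduces cleanly to the Claim.
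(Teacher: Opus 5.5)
Your existence argument is sound but takes a different route from the paper. You descend through the cograph decomposition, and your Claim makes each $Q_j$, and hence $W_0$, a clique. The paper instead takes $W_1,W_2$ to be maximal anticomplete connected sets containing the two edges and lets $W_0$ be their common neighbourhood. It then gets completeness from $P_4$-freeness and the clique property from the same paraglider configuration as your Claim. One small slip: $Q_j$ is not anticomplete to everything outside $V(G_j)$, since it is complete to $Q_0\cup\dots\cup Q_{j-1}$. You mean the vertices outside $V(G_j)\cup W_0$, and that is all your computation of $N(W_i)\cap W_0$ uses.

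The genuine gap is in the niceness part, and the exchange step you propose cannot work.

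\begin{itemize}
\item A singleton part $W_t=\{w\}$ with $t\ge 3$ is a component of $G-W_0$ different from $W_1$ and $W_2$. So, as you note yourself, $N(w)\subseteq W_0$, and $w$ is anticomplete to $W_1\cup W_2$.
\item After moving $w$ into $W_0$, the components of $G-(W_0\cup\{w\})$ are exactly the old parts $W_j$, $j\neq t$. All of them are anticomplete to $w$, so $W_0\cup\{w\}$ is complete to no part and \textsf{(R1)} fails.
\item In addition, $W_0\cup\{w\}$ need not be a clique, since strict containment does not make $w$ complete to $W_0$.
\end{itemize}

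No paraglider or co-gem configuration can repair this.

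The missing idea is to coarsen the partition rather than enlarge $W_0$. Take consecutive indices with $|W_i|=1$ and $|W_{i+1}|\ge 2$, and a vertex $w_0\in (N(W_i)\cap W_0)\setminus N(W_{i+1})$. Set $W_0':=N(W_{i+1})\cap W_0$, $W_1':=W_1\cup\dots\cup W_i\cup (W_0\setminus W_0')$, $W_2':=W_{i+1}$, and $W_j':=W_{i+j-1}$ for $j\ge 3$. Then:
\begin{itemize}
\item $W_1'$ is connected through $w_0$, which is complete to $W_1\cup W_2$ and, by \textsf{(R5)}, adjacent to every $W_j$ with $j\le i$.
\item By \textsf{(R5)}, no part $W_j$ with $j>i$ has a neighbour in $W_0\setminus W_0'$, so the new sets are components of $G-W_0'$.
\item $W_0'$ is complete to $W_1'\cup W_2'$ by \textsf{(R1)}, \textsf{(R4)}, \textsf{(R5)} and the clique property of $W_0$.
\end{itemize}
This is a good $(\ell-i+1)$-partition, obtained using only \textsf{(R1)}--\textsf{(R5)} and no forbidden subgraphs. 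For connected $G$ we have $i\ge 3$ by \textsf{(R2)}, so this contradicts the minimality of $\ell$.

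Hence, for connected $G$, a singleton part and the next non-singleton part always have equal neighbourhoods in $W_0$. For disconnected $G$, $W_0=\emptyset$ and all these neighbourhoods are trivially equal. In both cases your swap argument then finishes the proof.
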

\begin{proof}
    Let $G$ be a $(P_4$, paraglider$)$-free graph. Suppose that $G$  contains a $2P_2$ induced by the vertex-set, say $\{a_{1}, a_{2}, a_3, a_4\}$ such that $\{a_1a_2, a_3a_4\}\subseteq E(G)$.
       First suppose that $G$ is disconnected. Let $G_1, G_2, \ldots, G_{\ell}$ be the components of $G$, where $|V(G_1)|\geq 2$ and $\ell\geq 2$. We may assume that $|V(G_1)|\geq |V(G_2)| \geq \cdots \geq |V(G_{\ell})|$. We let $W_i:= V(G_i)$, for  all $i\in [\ell]$, and let $W_0:=\emptyset$. Then clearly  ($W_0$, $W_1$, $W_2$, \ldots, $W_{\ell}$) is a nice  $\ell$-partition of $V(G)$, and we are done.
       So suppose that $G$ is connected. Now
    we choose $W_1$ and $W_2$ to be maximal disjoint subsets of $V(G)$, each of which induces a connected subgraph of $G$, such that
       $\{a_{1},a_2\}\subseteq W_1$,  $\{a_{3},a_4\}\subseteq W_2$, and   $W_1$ is anticomplete to $W_2$. Let $W_0:= \{v\in V(G)\setminus (W_1\cup W_2)\mid v \mbox{ has neighbours in both } W_1 \text{ and } W_2\}$.   By the maximality of $W_1$ and $W_2$,  no vertex in $V(G)\setminus (W_1\cup W_2)$   has a neighbor in exactly one of the sets $W_1$, $W_2$; so $N(W_1\cup W_2) = W_0$.
                To proceed further, we claim the following:
         \begin{claim}\label{p4p-w012}
         The set $W_0$ is complete to $W_1\cup W_2$, and $W_0$ is a clique.
         \end{claim}
   \noindent{\it Proof of \cref{p4p-w012}}.~First suppose to the contrary that there is a vertex, say $w_0\in W_0$ which is not complete to $W_1\cup W_2$. Then by the definition of $W_0$, for $i,~ j\in [2]$ and $i\neq j$, there are  vertices,  $w_i$, $w_i'\in W_i$ (say) and a vertex $w_j\in W_j$ (say) such that $w_iw_i', w_0w_i, w_0w_j\in E(G)$ and $w_0w_i'\notin E(G)$, and then $\{w_j,w_0,w_i,w_i'\}$ induces a $P_4$ which is a contradiction; so $W_0$ is complete to $W_1\cup W_2$.

    Next if there are nonadjacent vertices in $W_0$, say $w_0$ and $w_0'$, then since $W_0$ is complete to $W_1\cup W_2$ (by the first assertion), we see that $\{a_{1},w_0,a_{3},w_0',a_{2}\}$ induces a paraglider which is a contradiction; so $W_0$ is a clique. This proves \cref{p4p-w012}. $\sq$

     \begin{claim}\label{p4p-AB}
        For any two components of $G - (W_0\cup W_1\cup W_2)$ induced by (say) $A$ and $B$  respectively, we have $N(A)\cap W_0 \subseteq N(B)\cap W_0$ or $N(B)\cap W_0 \subseteq N(A)\cap W_0$. Moreover, $N(A)\cap W_0$ (resp., $N(B)\cap W_0$) is complete to $A$ (resp., $B$).
    \end{claim}
     \noindent{\it Proof of \cref{p4p-AB}}.~Assume the hypothesis. Now if there are vertices, say $w_a\in (N(A)\cap W_0)\setminus (N(B)\cap W_0)$ and $w_b\in (N(B)\cap W_0)\setminus (N(A)\cap W_0)$, then there are vertices, say $a\in A$ and $b\in B$, such that $aw_a, bw_b\in E(G)$, and then from \cref{p4p-w012}, $\{a, w_a, w_b, b\}$ induces a $P_4$ which is a contradiction. So the first assertion holds.

    Next, suppose that there is a vertex, say $w_a'\in N(A)\cap W_0$ which is not complete to $A$. Then there are adjacent vertices, say $a', a''\in A$, such that $a'w_a'\in E(G)$, but $a''w_a'\notin E(G)$. Then  from \cref{p4p-w012}, we see that $\{a'', a', w_a', a_1\}$  induces a $P_4$ which is a contradiction. So $N(A)\cap W_0$  is complete to $A$. The other case is similar. $\sq$

       \medskip

      By using \cref{p4p-w012} and \cref{p4p-AB}, for $i\in \{3,4,\ldots, \ell\}$ and $\ell\geq 3$, we let $W_i$ denote the vertex-set of a component  of $G- (W_0\cup W_1\cup W_2)$ such that $N(W_{i-1})\cap W_0 \supseteq N(W_{i})\cap W_0$.  Note that since $G$ is connected, for each $i\in [\ell]$,  $N(W_i)\cap W_0 \neq \emptyset$. Thus from above claims, we conclude that ($W_0$, $W_1$, $W_2$, \ldots, $W_{\ell}$) is a good $\ell$-partition of $V(G)$. This proves the first assertion of \cref{lem1-p4parg}.

\medskip
 Next suppose that $\ell \geq 2$ is the least  integer such that  $G$ admits a good $\ell$-partition, say ${\cal P}:=$~($W_0$, $W_1$, $W_2$, \ldots, $W_{\ell}$), of $V(G)$. Then we have the following:
     \begin{claim}\label{p4p-wi=1wj=1}
          If there is an $i\in \{3,4,\ldots, \ell-1\}$ such that $|W_i| = 1$ and $|W_{i+1}| \geq 2$, then $N(W_{i})\cap W_0$ = $N(W_{i+1})\cap W_0$.
    \end{claim}
    \noindent{\it Proof of \cref{p4p-wi=1wj=1}}.~Let $i\in \{3,4,\ldots, \ell-1\}$ be such that $|W_i| = 1$ and $|W_{i+1}| \geq 2$. Let $W_i:= \{w\}$, and let $w', w''\in W_{i+1}$ be adjacent.    Suppose to the contrary that $N(W_{i})\cap W_0 \neq  N(W_{i+1})\cap W_0$. Then since $N(W_{i})\cap W_0 \supseteq N(W_{i+1})\cap W_0$, there is a vertex, say $w_0\in (N(W_i)\cap W_0)\setminus (N(W_{i+1})\cap W_0)$. Now we let $W_0':=N(W_{i+1})\cap W_0$, $W_1':=W_1\cup W_2\cup\cdots\cup W_i\cup (W_0\setminus W_0')$, and $W_j':=W_{i+j-1}$ for $j\in \{2, 3, \ldots, \ell-i+1\}$. Then since $w_0\in W_0\setminus W_0'$, we see that $W_1'$ induces a component of $G-W_0'$ (by \ref{p4p-5}). Also by using \ref{p4p-3}-\ref{p4p-5}, it is easy  to verify  the following:  \begin{itemize}[label=$\circ$, leftmargin=0.75cm] \itemsep=0pt
    \item $W_0'$ is a clique  which is complete to $W_1'\cup W_2'$, and $W_0'\neq \emptyset$.
    \item $\{w_0,w\}\subseteq W_1'$ and  $\{w',w''\}\subseteq W_2'$.
        \item For all $r \in [\ell-i+1]$,   $W_r'$ is a nonempty set that  induces a component of $G-W_0'$.

        \item For all $j\in \{3,4,\ldots, \ell-i+1\}$, $N(W_j')\cap W_0'$ is complete to $W_j'$.
        \item For all $r\in [\ell-i+1]$, $N(W_r')\cap W_0' \supseteq N(W_{r+1}')\cap W_0'$.
                   \end{itemize}
 Thus we conclude that ($W_0'$, $W_1'$, $W_2'$, \ldots, $W_{\ell-i+1}'$) is a good ($\ell-i+1$)-partition of $V(G)$, and this  contradicts the choice of $\ell$, since  $\ell-i+1<\ell$ for $i\geq 3$. So $N(W_{i})\cap W_0$ = $N(W_{i+1})\cap W_0$. This proves \cref{p4p-wi=1wj=1}.  $\sq$

     \medskip
     Now if  ${\cal P}$  is a nice $\ell$-partition of $V(G)$, then we are done; otherwise there is an index $i\in \{3,4,\ldots,\ell-1\}$ such that $|W_i|$ = 1 and $|W_{i+1}|\ge 2$. Then we let ${\cal P}':=$~($W_0'$, $W_1'$, $W_2'$, \ldots, $W_i'$, $W_{i+1}'$, \ldots $W_{\ell}'$), where $W_j':=W_j$  for $j\in [\ell]\setminus \{i,i+1\}$, $W_i' := W_{i+1}$ and $W_{i+1}':=W_i$. Then by using \cref{p4p-wi=1wj=1}, we conclude that ${\cal P}'$ is again a good $\ell$-partition of $V(G)$.  If  ${\cal P}'$  is a nice $\ell$-partition of $V(G)$, then we are done; otherwise we continue the above process by applying \cref{p4p-wi=1wj=1} until we get a nice $\ell$-partition of $V(G)$. This proves the second assertion of \cref{lem1-p4parg}.
    \end{proof}

Next we explore the structure of a (co-gem, paraglider)-free graph that contains a $C_5$ and prove some useful properties.

\smallskip
  Let $G$ be a (co-gem, paraglider)-free graph. Suppose that $G$ contains a $C_5$, say with vertex-set $C:=\{v_1,v_2,\ldots, v_5\}$ and with edge-set
 $\{v_1v_2,v_2v_3,v_3v_4,v_4v_5,v_5v_1\}$. For $i\in [5]$ and $i$ mod $5$, we let:
 \begin{center}
\begin{tabular}{ccl}
$A_i$ &:= & $\{u \in V(G)\setminus C \mid N(u) \cap C = \{v_i, v_{i+1}\}\}$,\\
$B_i$  &:= &  $\{u \in V(G)\setminus C \mid N(u) \cap C = \{v_{i-1},v_{i+1}\}\}$,\\
$D_i$  &:= &  $\{u \in V(G)\setminus C \mid N(u) \cap C = \{v_{i-1},v_i,v_{i+1}\}\}$,\\

$X_i$  &:= & $\{u \in V(G)\setminus C\mid N(u) \cap C = \{v_i, v_{i+2}, v_{i-2}\}\}$,\\
$Y_i$  &:= & $\{u \in V(G)\setminus C\mid N(u) \cap C = C\setminus \{v_i\}\}$, and\\
$Z$ &:=& $\{u \in V(G)\setminus C \mid N(u) \cap C = C\}$.
\end{tabular}
\end{center}
We let $A:=\cup_{i=1}^5A_i$, $B:=\cup_{i=1}^5B_i$, $D:=\cup_{i=1}^5D_i$, $X:=\cup_{i=1}^5X_i$ and $Y:=\cup_{i=1}^5Y_i$. Then since $G$ does not contain a co-gem, we see that any $u\in V(G)\setminus C$ is adjacent to at least two vertices in $C$, and hence $N(C)= A\cup B\cup D\cup X\cup Y\cup Z$ and $V(G)=C\cup N(C)$. Moreover,  we claim the following:
\begin{enumerate}[label=  $(\textsf{L}\arabic*)$, leftmargin=1.5cm, series=edu*] \itemsep=0pt
\item \label{parg-A}\emph{For each $i\in [5]$ and $i$ mod $5$, the following hold:}
\begin{enumerate}[label=  ($\roman*$), topsep=0pt] \itemsep=0pt
 \item \emph{$A_i$ induces a $P_4$-free graph.}
 \item \emph{$A_i$ is complete to $(A\setminus A_i)\cup D_i\cup D_{i+1}$.}
 \item \emph{$A_i$ is anticomplete to $(B\setminus B_{i-2})\cup (X\setminus X_{i-2})\cup Y_i\cup Y_{i+1}$.}
\end{enumerate}
\end{enumerate}
\noindent{\it Proof of} \ref{parg-A}.~By symmetry, we prove the assertions for $i=1$.

\smallskip
\noindent{$(i)$}:~If $G[A_1]$ contains a  $P_4$, then $G[A_1 \cup \{v_{4}\}]$ contains  a co-gem; so  $G[A_1]$ is $P_4$-free. $\diamond$

\smallskip
\noindent{$(ii)$}:~If there are nonadjacent vertices, say $a\in A_1$ and $p\in (A \setminus A_1)\cup D_1\cup D_2$, then one of $\{v_5,v_4,v_3,p,a\}$ or $\{a,v_2,v_3,p, v_5\}$ or $\{a,v_1,v_5,p,v_3\}$ induces a co-gem; so $A_1$ is complete to $(A \setminus A_1)\cup D_1\cup D_2$. $\diamond$

\smallskip
\noindent{$(iii)$}:~Suppose that there are adjacent vertices, say $a\in A_1$ and $p\in (B\setminus B_{4})\cup (X\setminus X_{4})\cup Y_1\cup Y_{2}$. If $p\in B_1\cup X_2\cup X_5\cup Y_1$, then $\{v_1,v_2,p,v_5,a\}$ induces a paraglider, and if $p\in B_3$, then $\{p,a,v_1,v_5,v_3\}$ induces a co-gem, and the other cases are similar. $\sq$

\begin{enumerate}[label=  $(\textsf{L}\arabic*)$, leftmargin=1.5cm, resume=edu*] \itemsep=0pt
\item\label{parg-B}\emph{For each $i\in [5]$ and $i$ mod $5$, the following hold:} \begin{enumerate}[label=  ($\roman*$),topsep=0pt] \itemsep=0pt
\item \emph{$B_i$ is a stable set.}
\item \emph{No vertex in $B_{i-2}\cup B_{i+2}$ is mixed on $B_{i}$.}
\item \emph{$B_i$ is complete to $B_{i-1}\cup B_{i+1} \cup D_{i}\cup Y_{i-2}\cup Y_{i+2}\cup Z$.}
\item \emph{$B_i$ is anticomplete to $D_{i-1}\cup D_{i+1}\cup (X\setminus X_{i})\cup Y_{i-1}\cup Y_i\cup Y_{i+1}$.}
\end{enumerate}
\end{enumerate}
\noindent{\it Proof of} \ref{parg-B}.~By symmetry, we prove the assertions for $i=1$.

\smallskip
\noindent{$(i)$}:~If there are adjacent vertices in $B_1$, say $b_1$ and $b_1'$, then $\{v_1,v_2,b_1,v_5,b_1'\}$ induces a paraglider; so $B_1$ is a stable set. $\diamond$

\smallskip
\noindent{$(ii)$}:~Suppose to the contrary that there is a vertex, say $b_3\in B_3$, which is mixed on $B_1$. Then there are vertices, say  $b_1, b_1'\in B_1$, such that $b_3b_1\in E(G)$ and $b_3b_1'\notin E(G)$. But then $\{b_3,b_1,v_5,b_1',v_3\}$ induces a co-gem (by \ref{parg-B}:$(i)$); so no vertex in $B_3$ is mixed on $B_1$, and  the proof that no vertex in  $B_4$ is  mixed on $B_1$ is similar.  $\diamond$

\smallskip
\noindent{$(iii)$}:~Suppose to the contrary that there are nonadjacent vertices, say $b\in B_1$ and $p\in B_2\cup B_5\cup D_1\cup Y_3\cup Y_4\cup Z$. Now if $p\in B_2\cup B_5$, then $\{p,v_1,v_2,b,v_4\}$ or $\{p,v_1,v_5,b,v_3\}$ induces a co-gem, and if  $p\in  D_1\cup Y_3\cup Y_4\cup Z$, then $\{b,v_2,v_1,v_5,p\}$ induces a paraglider. $\diamond$

\smallskip
\noindent{$(iv)$}:~Suppose that there are adjacent vertices, say $b\in B_1$ and $q\in D_{2}\cup D_{5}\cup (X\setminus X_{1})\cup Y_{1}\cup Y_2\cup Y_{5}$, then $\{v_5,v_1,v_2,b,q\}$ induces a paraglider. $\sq$

\begin{enumerate}[label=  $(\textsf{L}\arabic*)$, leftmargin=1.5cm, resume=edu*] \itemsep=0pt
\item\label{parg-XY} {\em For each $i\in [5]$ and $i$ mod $5$, $|Y_i\cup X_{i-1}\cup X_{i+1}|\leq 1$.}
\end{enumerate}
\noindent{\it Proof of} \ref{parg-XY}.~If there are vertices, say $u$ and $w$, in $Y_i\cup X_{i-1}\cup X_{i+1}$, then one of $\{v_i,v_{i+1},u,v_{i-1},$ $w\}$ or $\{u,v_{i+2}, w,v_{i-1},v_{i+1}\}$ or  $\{u,v_{i+1}, w,v_{i-1},v_{i-2}\}$  induces a paraglider, or $\{u,v_{i+2},v_{i-2},w,$ $v_i\}$ induces a co-gem. So \ref{parg-XY} holds. $\sq$.

\begin{enumerate}[label=  $(\textsf{L}\arabic*)$, leftmargin=1.5cm, resume=edu*] \itemsep=0pt
\item\label{parg-B-com} \emph{For $i\in [5]$, if $|B_i|\geq 3$, then $G$ has a pair of comparable vertices.}
\end{enumerate}
 \noindent{\it Proof of} \ref{parg-B-com}.~We let $i=1$ and let $\{r,s,t\}\subseteq B_1$. We prove that there is a pair of comparable vertices in $\{r,s,t\}$.   Suppose to the contrary that no two vertices in $\{r,s,t\}$ are comparable. Then   since $|X_1|\leq 1$ (by \ref{parg-XY}), at least two vertices in  $\{r,s,t\}$ have the same set of neighbours in  $X_1$, say $r$ and $t$. Then since $r$ and $t$ are not comparable, from \ref{parg-A}:$(iii)$ and \ref{parg-B},  there are vertices, say $r'$ and $t'$ in $A_{3}\cup D_{3}\cup D_{4}$, such that $rr', tt'\in E(G)$ and $rt',tr'\notin E(G)$. Then one of $\{r,r',v_{3},t',v_1\}$ or $\{r,r',t',t,v_1\}$ induces a co-gem which is a contradiction. So \ref{parg-B-com} holds. $\sq$

\begin{enumerate}[label=  $(\textsf{L}\arabic*)$, leftmargin=1.5cm, resume=edu*] \itemsep=0pt
\item\label{parg-Z} \emph{$Z$ is a clique which is complete to $B\cup X\cup Y$.}
\end{enumerate}
 \noindent{\it Proof of} \ref{parg-Z}.~If there are nonadjacent vertices, say $z\in Z$ and  $p\in B_i\cup X_{i+1}\cup Y_i\cup Z$ for some $i\in [5]$, then $\{v_i,v_{i+1},p,v_{i-1},z\}$ or $\{v_i,z,v_{i-2},p,v_{i+1}\}$ induces a paraglider; so $Z$ is a clique which is complete to $B_i\cup X_{i+1}\cup Y_i$, and then \ref{parg-Z} follows since $i$ is arbitrary. $\sq$

\setcounter{claim}{0}

\begin{lemma}\label{lem:parg-A-2P2}
 Let $G$ be a $k$-vertex-critical (co-gem, paraglider)-free graph. Suppose that $G$ contains a $C_5$, say with vertex-set $C:=\{v_1,v_2,\ldots, v_5\}$ and with edge-set  $\{v_1v_2,v_2v_3,v_3v_4,v_4v_5,v_5v_1\}$. For $i\in [5]$, let $A_i$, $B_i$, $D_i$, $X_i$, $Y_i$  and $Z$ be defined as above. If $G[A_i]$ contains a $2P_2$, then $|A_i|$ is bounded above by a function of $k$, that is, $|A_i|\leq 3k+2^{3k+2}$.
\end{lemma}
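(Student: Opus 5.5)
The plan is to apply the structure theorem for $(P_4$, paraglider$)$-free graphs to $G[A_i]$, and then use the two local obstructions available in a $k$-vertex-critical graph: it has no large clique and, by \cref{lem:critical-clqcut-comp}, no pair of comparable vertices. By symmetry let $i=1$. By \ref{parg-A}:$(i)$, $G[A_1]$ is $P_4$-free, and it is paraglider-free because $G$ is. Since $G[A_1]$ contains a $2P_2$, \cref{lem1-p4parg} gives a nice $\ell$-partition $(W_0,W_1,\ldots,W_\ell)$ of $A_1$, with $\ell$ least. Let $\alpha$ be the index such that $|W_t|\ge 2$ for $t\le\alpha$ and $|W_t|=1$ for $t>\alpha$. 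I will bound three quantities separately: $|W_0|$, $|W_1\cup\cdots\cup W_\alpha|$, and the number $\ell-\alpha$ of singleton components.

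\textbf{Step 1: bounding $W_0$.} The set $W_0$ is a clique complete to $W_1\cup W_2$, and $\{v_1,v_2\}$ is complete to $A_1$. Hence $W_0\cup\{v_1,v_2\}$ together with an edge of $W_1$ is a clique. Since $\omega(G)\le k$, this gives $|W_0|\le k-4$.

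\textbf{Step 2: bounding the nonsingleton components.} Each $W_t$ with $t\le\alpha$ induces a connected $P_4$-free graph. So it is not co-connected, and its anticomponents are complete to one another. I would show that the nonsingleton components contribute at most about $2k$ vertices. First, at most two such components can exist without creating a co-gem together with a vertex of $C$ or with a vertex that sees $W_0$ only partially; this uses \ref{p4p-5}. Second, inside each such component, any two nonadjacent vertices in the same anticomponent with the same trace outside $A_1$ would be comparable. Hence each anticomponent is essentially a clique, and the component has fewer than $k$ vertices. If only one component is nonsingleton, the same argument applies; recall that \ref{p4p-2} forces $|W_2|\ge 2$ when $G[A_1]$ is connected.

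\textbf{Step 3: bounding the singleton components (the main obstacle).} Two singleton components $\{u\},\{w\}$ are nonadjacent and, by \ref{p4p-5}, have nested neighbourhoods in $W_0$. They are also anticomplete to every other $W_t$. Inside $A_1$ their neighbourhoods are therefore nested, so they can fail to be comparable only through their neighbours outside $A_1$. By \ref{parg-A}, every vertex of $A\setminus A_1$, $D_1$, $D_2$, $v_1$ and $v_2$ is complete to $A_1$. Every vertex of $B\setminus B_4$, $X\setminus X_4$, $Y_1$ and $Y_2$ is anticomplete to $A_1$. So only vertices of $M:=B_4\cup X_4\cup Y_3\cup Y_4\cup Y_5\cup Z\cup D_3\cup D_4\cup D_5$ that are mixed on the singletons matter. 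I would prove that the set $M'\subseteq M$ of vertices mixed on $A_1$ has size at most $3k+2$. By \ref{parg-XY}, $|X_4|\le 1$ and $|Y_j|\le 1$, and by \ref{parg-Z}, $Z$ is a clique. For $B_4$, the $D_j$'s and $Z$, I would use the $2P_2$ $a_1a_2,a_3a_4$ in $W_1\cup W_2$ together with co-gem- and paraglider-freeness to show that the mixed vertices in each group form a clique, hence have size at most $k$. This yields $3k$ plus a couple of single vertices; grouping the pieces correctly is the delicate part.

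Once $|M'|\le 3k+2$ is established, assume two singletons $u,w$ have the same neighbourhood in $M'$. Their full neighbourhoods are then nested, so they are comparable, contradicting \cref{lem:critical-clqcut-comp}. Hence there are at most $2^{3k+2}$ singletons. Adding the bounds from Steps 1–3, and absorbing the small constants, gives $|A_1|\le 3k+2^{3k+2}$.
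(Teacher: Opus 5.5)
\textbf{Overall.} Your skeleton matches the paper's: a nice $\ell$-partition of $A_1$ with $\ell$ least, then separate bounds for $W_0$, for the nonsingleton parts, and for the singletons via comparability. Step 1 and the nesting argument in Step 3 are correct.

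\textbf{Step 2 is the main gap.} You never use the other half of the folklore lemma: a $k$-vertex-critical graph has no clique cutset. Without it, Step 2 fails. For example, let $A_1=\{q\}\cup W_1\cup W_2\cup W_3$, where each $W_j$ is an edge and $q$ is complete to all of them. Then $G[C\cup A_1]$ is co-gem-free and paraglider-free, and $\ell=3$ is least. So no co-gem ``with a vertex of $C$'' rules out a third nonsingleton part; what rules it out is that $N(W_j)=\{q,v_1,v_2\}$ is a clique cutset.

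The paper proceeds as follows. For $|W_i|\ge 2$, every $x\in N(W_i)\setminus(C\cup W_0)$ lies in $D_1\cup D_2\cup X_4\cup Y_3\cup Y_4\cup Y_5\cup Z$, and is complete to $W$ unless $x\in Z$. Hence $N(W_i)$ is a clique except for non-edges between $W_0$ and $Z$. The absence of clique cutsets then yields nonadjacent $z_i^*\in N(W_i)\cap Z$ and $q_i^*\in N(W_i)\cap W_0$. In particular $W_0\neq\emptyset$, so $G[A_1]$ is connected and $|W_2|\ge 2$. Moreover, $z_i^*$ is complete to $W_i$, and for each $j\ne i$, $z_i^*$ or $q_i^*$ is anticomplete to $W_j$.

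These vertices drive the rest of the argument:
\begin{itemize}
\item $W_i$ is a clique. A vertex $x$ distinguishing nonadjacent $w_i,w_i'\in W_i$ must lie in $W_i\cup Z$, and then yields a paraglider such as $\{q_i^*,w_i,z_i^*,w_i',x\}$.
\item A third nonsingleton part $W_3$ is impossible. The co-gem $\{w_1,z_1^*,z_2^*,w_2,w_3\}$ forces $q_1^*$ or $q_2^*$ to miss $W_3$. Then $W_1\cup W_2\cup(W_0\setminus N(W_3))$ becomes one part of a good $(\ell-1)$-partition. The contradiction comes from the minimality of $\ell$, not from (R5).
\end{itemize}

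Your substitute for the clique property is also wrong. Two nonadjacent vertices in the same anticomponent with equal traces outside $A_1$ need not be comparable, because their neighbourhoods inside $W_i$ need not be nested (e.g.\ an anticomponent inducing $2K_2$). Even if it held, a pigeonhole over traces would give only an exponential bound on $|W_i|$, not the linear bound the stated estimate needs.

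\textbf{Step 3 leaves its key estimate unproved.} The bound $|M'|\le 3k+2$ is only asserted. Your plan, in which each of $B_4$, $D_3$, $D_4$, $D_5$, $Z$ contributes a clique of mixed vertices, gives only $5k+4$, which is too weak for $3k+2^{3k+2}$. Also, ``mixed on $A_1$'' counts vertices that only see $W_0$, and these cannot separate singletons. You need to restrict to neighbours of singletons and prove the following:
\begin{itemize}
\item By the description of $N(W_1)$ and $N(W_2)$ above, no vertex of $B_4\cup D_3\cup D_4\cup D_5$ has a neighbour in $W_1\cup W_2$.
\item A vertex $s\in B_4\cup D_3\cup D_5\cup Y_3\cup Y_5$ adjacent to a singleton $w$ but not to $W_1$ gives the co-gem $\{w,s,v_3,v_4,w_1\}$ or $\{w,s,v_4,v_5,w_1\}$.
\item Neighbours of $W_1$ outside $C\cup W_0\cup Z$ are complete to $W$.
\item The $D_4$-neighbours of $W$ form a clique; otherwise there is a co-gem with $w_1$ or a paraglider with $v_3$.
\end{itemize}
Then only $N(W)\cap D_4$, $Z$, $X_4$ and $Y_4$ can distinguish two singletons. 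Once this is established, your use of (R5) to dispose of $W_0$ is valid and would even sharpen the exponent from $3k+2$ to $2k+2$. As written, however, both the linear bound on $W_1\cup W_2$ and the exponent rest on unproved claims.
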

\begin{proof}
       By symmetry, we may assume that $G[A_1]$ contains a $2P_2$. Then by using \ref{parg-A}:$(i)$ and \cref{lem1-p4parg}, we choose the least positive integer $\ell\geq 2$ such that  $G[A_1]$ admits a good $\ell$-partition. Since $\ell\geq 2$ is the least positive integer, again by \cref{lem1-p4parg},  $G[A_1]$ admits a nice $\ell$-partition, say ${\cal P}:=$ ($W_0$, $W_1$, $W_2$, \ldots, $W_{\ell}$), of $A_1$. Recall that,  by the definition of a nice $\ell$-partition,  there is an index $\alpha\in [\ell]$ such that $|W_t|\geq 2$ for $t\in [\alpha]$ and $|W_{t}|=1$, for $t\in [\ell]\setminus [\alpha]$. We let $W$ := $\cup_{j=1}^{\ell}W_j$.
   Now we claim the following, where $i\in [\ell]$.

   \begin{claim}\label{clm:A1-2p2-nei}
        Suppose that $|W_i|\ge 2$. Let $x\in N(W_i)\setminus (C\cup W_0)$.  Then $x\in D_1\cup D_2\cup X_4\cup Y_3\cup Y_4\cup Y_5\cup Z$, and hence $x$ is complete to $\{v_1,v_2\}$.  Moreover, if $x\notin Z$, then $x$ is complete to $W$.
    \end{claim}
    \noindent{\it Proof of \cref{clm:A1-2p2-nei}}.~Let $x\in N(W_i)\setminus (C\cup W_0)$ and let $x\in N(w_i)$ where $w_i \in W_i$.
Let $w_i'\in W_i$ be adjacent to $w_i$. Also, there is an index $j\in [\ell]$ and $j\neq i$ such that $|W_j|\neq \emptyset$, so let $w_j\in W_j$. Clearly, $x\notin A_1$ (by \ref{p4p-1}).  Suppose to the contrary that $x\notin D_1\cup D_2\cup X_4\cup Y_3\cup Y_4\cup Y_5\cup Z$.   So by \ref{parg-A}:$(ii)$ and  \ref{parg-A}:$(iii)$, we see that $x\in (A\setminus A_1)\cup B_4 \cup D_3\cup D_4\cup D_5$.  Now if $x\in A\setminus A_1$, then $\{v_1,w_i,w_i',w_j,x\}$  or $\{v_2,w_i,w_i',w_j,x\}$ induces a paraglider (by \ref{parg-A}:(ii)), and if $x\in B_4$, then $G[\{v_1,w_i,w_i',w_j,x, v_4\}]$ contains a co-gem or a paraglider; so $x\in D_3\cup D_4\cup D_5$. Also if $x\in D_3$, then since $G[\{v_5,v_4,x,w_i,w_i',w_j\}]$ does not contain a co-gem, we have $xw_i',xw_j\in E(G)$, and then $\{v_1,w_i,x,w_j,w_i'\}$ induces a paraglider; so $x\notin D_3$. Likewise, $x\notin D_5$. Hence $x\in D_4$. Then one of $\{x,w_i,v_2,v_3,w_i'\}$ or $\{v_1,w_i,x,w_j,v_2\}$ induces a paraglider, or $\{w_i',w_i,x,v_3,w_j\}$ induces a co-gem, a contradiction; so $x\in D_1\cup D_2\cup X_4\cup Y_3\cup Y_4\cup Y_5\cup Z$. In particular, $x$ is complete to $\{v_1,v_2\}$.

 Now we show that if $x\notin Z$, then $x$ is complete to $W$. If there is a vertex, say $w\in W$, which is nonadjacent to $x$, then from the first assertion and from \ref{parg-A}:$(ii)$, clearly $x\in X_4\cup Y_3\cup Y_4\cup Y_5$, and so $G[\{w_i, x, v_4, v_3, w_j, w\}]$ or $G[\{w_i, x, v_4, v_5, w_j, w\}]$ contains a co-gem; hence $x$ is  complete to $W$.   This proves \cref{clm:A1-2p2-nei}. $\sq$

  \begin{claim}\label{clm:A1-2p2-clq}
        Suppose that $|W_i|\ge 2$.  Then for any $x, y\in N(W_i)$,  we have  $xy\in E(G)$,  or   $x\in W_0$ and $y\in Z$ or vice versa.
    \end{claim}
   \noindent{\it Proof of \cref{clm:A1-2p2-clq}}.~Let $x$ and $y$  be two nonadjacent vertices in $N(W_i)$. From \cref{clm:A1-2p2-nei}, clearly neither $x$ nor $y$ is in $\{v_1, v_2\}$, and we may assume that  $x$, $y\in W_0\cup D_1\cup D_2\cup X_4\cup Y_3\cup Y_4\cup Y_5\cup Z$.  If $x$, $y\in W_0\cup D_1\cup D_2\cup X_4\cup Y_3\cup Y_4\cup Y_5$, then $\{x,y\}$ is complete to $W_1\cup W_2$ (by \cref{clm:A1-2p2-nei}), and then   $\{x, w_{1}, y, w_{2}, w_1'\}$ induces a paraglider, where $w_1$ and $w_1'$ are adjacent vertices in $W_1$  and  $w_2\in W_2$; so $x\in Z$ or $y\in Z$. We may assume that $y\in Z$. Then from \ref{parg-Z}, since $Z$ is a clique which is complete to $X_4\cup Y_3\cup Y_4\cup Y_5$, we have $x\in D_1\cup D_2\cup W_0$. Now if $x\in D_1$, then   for some $w_i\in W_i$ which is adjacent to $y$, we see that $\{v_2,x,v_5,y,w_i\}$   induces a paraglider (by \ref{parg-A}:$(ii)$); so $x\notin D_1$. Similarly, $x\notin D_2$. So we conclude that $x\in W_0$. This proves \cref{clm:A1-2p2-clq}. $\sq$

   \begin{claim}\label{clm:A1-2p2-cut}
  Suppose that $|W_i|\ge 2$. Then    there is a vertex in $N(W_i)\cap Z$ and there is a vertex in $N(W_i)\cap W_0$ which are nonadjacent.
   \end{claim}
      \noindent{\it Proof of \cref{clm:A1-2p2-cut}}.~If $N(W_i)\cap Z$ is complete to $N(W_i)\cap W_0$, then from \cref{clm:A1-2p2-clq}, $N(W_i)$ is a clique cutset separating the component containing $W_i$  and the component containing $\{v_3, v_4, v_5\}$ in $G-N(W_i)$ which is a contradiction to \cref{lem:critical-clqcut-comp}. So   there is a vertex in $N(W_i)\cap Z$ and there is a vertex in $N(W_i)\cap W_0$ which are nonadjacent. This proves \cref{clm:A1-2p2-cut}. $\sq$

\medskip
If  $|W_i|\ge 2$ for some $i\in [\ell]$, then by \cref{clm:A1-2p2-cut}, we let $z_i^*\in N(W_i)\cap Z$ and $q_i^*\in  N(W_i)\cap W_0$ be such that $z_i^*q_i^*\notin E(G)$. So since $|W_1|\geq 2$,    $z_1^*$ and $q_1^*$ exist and  $z_1^*q_1^*\notin E(G)$. Then since  $q_1^*\in W_0$, we have $W_0\neq \emptyset$, and hence $G[A_1]$ is connected (by \ref{p4p-3}). So from \ref{p4p-2}, we have $|W_2|\geq 2$, and hence   $z_2^*$ and $q_2^*$ exist and $z_2^*q_2^*\notin E(G)$. Moreover, since $G[A_1]$ is connected, we have $N(W_i)\cap W_0\neq \emptyset$, for all $i\in [\ell]$.

\begin{claim}\label{clm:A1-2p2-cut2}
  Suppose that $|W_i|\ge 2$ for some $i\in [\ell]$.  Then  $z_i^*$ is complete to $W_i$. Moreover, $z_i^*$ or $q_i^*$ is anticomplete to $W_j$, for all $j\in [\ell]$ and $j\neq i$. (So since $q_1^*,q_2^*\in W_0$ and since $\{q_1^*,q_2^*\}$ is complete to $W_1\cup W_2$, we have $z_1^*$ is anticomplete to $W_2$, and  $z_2^*$ is anticomplete to $W_1$.)
  \end{claim}
 \noindent{\it Proof of \cref{clm:A1-2p2-cut2}}.~Suppose to the contrary that $z_i^*$ is not complete to $W_i$. Since  $|W_1|\geq 2$ and $|W_2|\geq 2$, there is an index $j\in [2]$ and $j\neq i$ such that $|W_j|\geq 2$, and let $w_j,w_j'\in W_j$. Since $z_i^*$ is not complete to $W_i$,  there are adjacent vertices, say $w_i$, $w_i'\in W_i$, such that $w_iz_i^*\in E(G)$ and $w_i'z_i^*\notin E(G)$.   Then $z_i^*$ is complete to $\{w_j, w_j'\}$, for otherwise $G[\{w_i',w_i,z_i^*,v_4,w_j, w_j'\}]$ contains a co-gem, and then from \ref{p4p-3} and \ref{p4p-4}, $\{w_i, z_i^*, w_i', q_i^*, w_j\}$ induces a paraglider; so $z_i^*$ is complete to $W_i$.

    Next suppose that there  is an $r\in [\ell]$ and $r\neq i$  such that that $q_i^*$ is not anticomplete to $W_r$. If $z_i^*$ is adjacent to a vertex, say $w_r\in W_r$, then by using the first assertion and \ref{p4p-4}, we see that  $\{w_i, z_i^*, w_r, q_i^*, w_i'\}$ induces a paraglider; so  $z_i^*$  is anticomplete to $W_r$, for all $r\in [\ell]$ and $r\neq i$. This proves \cref{clm:A1-2p2-cut2}. $\sq$

\medskip

   \begin{claim}\label{clm:A1-2p2-Wi-clq}
        If $|W_i|\ge 2$, then $W_i$ is a clique.
    \end{claim}
    \noindent{\it Proof of \cref{clm:A1-2p2-Wi-clq}}.~Suppose to the contrary that there are nonadjacent vertices in $W_i$, say $w_i$ and $w_i'$. We show  that $w_i$ and $w_i'$  are comparable vertices. Suppose not. Then there is a vertex, say $x\in N(w_i)\setminus N(w_i')$. Clearly, from \cref{clm:A1-2p2-nei}  and \ref{p4p-4}, we have $x\in W_i\cup Z$. Also since  $|W_i|\ge 2$,  the vertices $z_i^*\in N(W_i)\cap Z$ and $q_i^*\in  N(W_i)\cap W_0$  exist and $z_i^*q_i^*\notin E(G)$, and hence   $z_i^*$ is complete to $W_i$ (by \cref{clm:A1-2p2-cut2}). Now if $x\in W_i$, then $\{q_i^*, w_i, z_i^*, w_i', x\}$ induces a paraglider; so $x\in Z$. Also  since $x$ is not complete to $W_i$, there are adjacent vertices, say $p_1$, $p_2\in W_i$, such that $xp_1\in E(G)$ and $xp_2\notin E(G)$, and hence if there is a vertex, say $w\in W\setminus W_i$, which is nonadjacent to $x$, then $\{p_2,p_1,x,v_4, w\}$ induces a co-gem; so $x$ is complete to $W\setminus W_i$. Then $xq_i^*\in E(G)$; for otherwise, for some adjacent vertices, say $w_j$, $w_j'\in W_j$, where $j\in \{1,2\}$ and $j\neq i$, $\{q_i^*, w_i, x, w_j, w_j'\}$ induces a paraglider (by \ref{p4p-3}). But now  $\{q_i^*, w_i', z_i^*, w_i, x\}$ induces a paraglider (by \ref{parg-Z}) which is a contradiction. So $w_i$ and $w_i'$  are comparable vertices in $G$ which is a contradiction to \cref{lem:critical-clqcut-comp}. So the claim holds. $\sq$

\begin{claim}\label{clm:A1-2p2-alpha2}
      For all $t\ge 3$, $|W_t|$ = 1; so we have $\alpha = 2$.
    \end{claim}
\noindent{\it Proof of  \cref{clm:A1-2p2-alpha2}}.~We will show that $|W_3|=1$. Suppose to the contrary that there are adjacent vertices, say $w_3$, $w_3'\in W_3$. Then for some $w_1\in W_1$ and $w_2\in W_2$,  since $\{w_1, z_1^*, z_2^*, w_2, w_3\}$ does not induce a co-gem (by \cref{clm:A1-2p2-cut2}), we have $z_1^*w_3\in E(G)$ or $z_2^*w_3\in E(G)$. So from \cref{clm:A1-2p2-cut2}, $q_1^*$ or $q_2^*$ is anticomplete to $W_3$. Due to symmetry, we may assume that $q_2^*$ is anticomplete to $W_3$. Now we let $W_0':= N(W_3)\cap W_0$, $W_1':=W_1\cup W_2\cup (W_0\setminus W_0')$, and $W_j':= W_{j+1}$ for $j\in \{2, 3, \ldots, \ell-1\}$. Then since $q_2^*\in (W_0\setminus W_0')$,  $W_1'$ induces a component of $G[A_1]-W_0'$ (by \ref{p4p-3}). Also by using \ref{p4p-3}-\ref{p4p-5}, it is easy to see that the following hold:
\begin{itemize}[label=$\circ$, leftmargin=0.6cm] \itemsep=0pt
 \item  $W_0'$ is a clique  which is complete to $W_1'\cup W_2'$, and $W_0'\neq \emptyset$.
 \item  $W_1\subseteq W_1'$ and  $\{w_3,w_3'\}\subseteq W_2'$.
        \item For all $t\in [\ell-1]$, $W_t'$ is a nonempty set that induces a component  of $G[A_1]-W_0'$.

        \item  For all $j\in \{3,4,\ldots, \ell-1\}$, $N(W_j')\cap W_0'$ is complete to $W_j'$.
       \item For all $t\in [\ell-1]$, $N(W_t')\cap W_0' \supseteq N(W_{t+1}')\cap W_0'$.
    \end{itemize}
 Thus we conclude that ($W_0'$, $W_1'$, $W_2'$, \ldots, $W_{\ell-1}'$) is a good ($\ell-1$)-partition of $A_1$, and this contradicts our choice of $\ell$. This proves \cref{clm:A1-2p2-alpha2}. $\sq$

\begin{claim}\label{clm:A1-2p2-Wi-1}
        For   $i \in \{3,4,\dots,\ell\}$,   we have  $N(W_i)\subseteq N(W_1)\cup D_4\cup X_4\cup Y_4\cup Z$. Moreover,  if there is a vertex, say  $d\in N(W_i)\cap D_4$, then  $d$ is complete to $D_4 \setminus \{d\}$; so $N(W)\cap D_4$ is a clique.
    \end{claim}
  \noindent{\it Proof of \cref{clm:A1-2p2-Wi-1}}.~Let  $i \in \{3,4,\dots,\ell\}$,  and let $W_i:=\{w_i\}$ (by \cref{clm:A1-2p2-alpha2}), and let $s\in N(w_i)$.  If $s\notin N(W_1)\cup D_4\cup X_4\cup Y_4\cup Z$, then by \ref{p4p-1}, \ref{parg-A} and \cref{clm:A1-2p2-nei}, we see that $s\in B_4\cup D_3\cup D_5\cup Y_3\cup Y_5$, and then for some $w_1\in W_1$, $\{w_i,s,v_3,v_4,w_1\}$  or $\{w_i,s,v_4,v_5,w_1\}$ induces a co-gem; so the first assertion holds.

  Next, suppose that $d\in N(w_i)\cap D_4$. If there is a vertex, say $d^*\in D_4\setminus \{d\}$ such that $dd^* \notin E(G)$, then from \cref{clm:A1-2p2-nei}, for some $w_1\in W_1$, $\{w_i,d,v_4,d^*, w_1\}$ induces a co-gem or $\{w_i,d,v_4,d^*,v_3\}$ induces a paraglider; so  $d$ is complete to $D_4 \setminus \{d\}$. Hence it follows from \cref{clm:A1-2p2-nei}, that $N(W)\cap D_4$ is a clique.  $\sq$

   \begin{claim}\label{A1-2p2-bndl}
        We have $\ell-2 ~\le~ 2^{|D_4\cup X_4\cup Y_4\cup Z\cup W_0|}~ \leq~ 2^{3k+2}$.
    \end{claim}
   \noindent{\it Proof of \cref{A1-2p2-bndl}}.~Suppose to the contrary that $\ell-2  >  2^{|D_4\cup X_4\cup Y_4\cup Z\cup W_0|}$. For $i, j\in \{3,4,\dots,\ell\}$ and $i\neq j$ and for any $x\in N(W_i)\setminus N(W_j)$, from \cref{clm:A1-2p2-nei} and \cref{clm:A1-2p2-Wi-1}, we have $x\in D_4\cup X_4\cup Y_4\cup Z\cup W_0$. However from \ref{p4p-3}, \ref{parg-Z}, \cref{clm:A1-2p2-Wi-1} and since $G$ is $k$-colorable, we have $|N(W)\cap (D_4\cup Z\cup W_0)|\le 3k$,  and from \ref{parg-XY}, we have $|X_4\cup Y_4|\le 2$. So if $\ell-2 > 2^{3k+2}$, there are indices $a, b\in\{3,4,\dots,\ell\}$ such that $N(W_a) = N(W_b)$. From \cref{clm:A1-2p2-alpha2}, we have $|W_a|$ = $|W_b|$ = 1; so the vertices in $W_a\cup W_b$ are a pair of comparable vertices which is a contradiction to \cref{lem:critical-clqcut-comp}. This proves \cref{A1-2p2-bndl}. $\sq$

 \medskip
    Now, from \cref{clm:A1-2p2-Wi-clq}, \cref{clm:A1-2p2-alpha2}, and \cref{A1-2p2-bndl}, we can conclude that $|A_1| = |W_0\cup W_1\cup W_2\cup \ldots \cup W_{\ell}| \le k+k+k+1+1+\ldots+1 = 3k+2^{3k+2}$. This completes the proof of \cref{lem:parg-A-2P2}.
\end{proof}

\begin{figure}
\centering
 \includegraphics{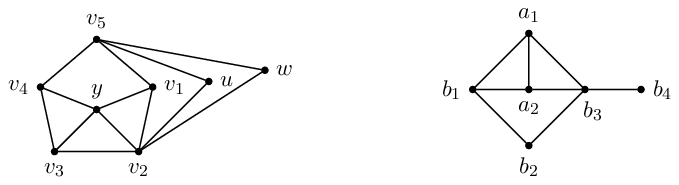}
 \caption{The graph  $F_1$ (left) and the graph $F_2$ (right).}\label{fig:F12}
\end{figure}

\begin{lemma}\label{parg-F2}
 If $G$ is  a   (co-gem, paraglider)-free graph that contains an $F_1$, then $G$ has a pair of comparable vertices.
\end{lemma}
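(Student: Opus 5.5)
The plan is to find, inside a fixed induced copy of $F_1$, two nonadjacent vertices $u$ and $w$ whose neighbourhoods must be nested in all of $G$. Then $u,w$ is the required comparable pair. I am working from the picture of $F_1$ in \cref{fig:F12}, and I assume it consists of a $C_5$ together with a small number of attached vertices. Fix an induced copy of $F_1$ and let $C:=\{v_1,\dots,v_5\}$ be its $C_5$, labelled as before. Then every vertex of $G$ lies in $C\cup A\cup B\cup D\cup X\cup Y\cup Z$, and \ref{parg-A}--\ref{parg-Z} are available. The extra vertices of $F_1$ then lie in specific sets of this partition. The first step is to choose the labelling (using the symmetry of the $C_5$) so that these vertices sit in fixed sets, say $B_1$ or $A_1$ or $D_1$.

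Next I choose the candidate pair. The natural choice is two nonadjacent vertices of $F_1$ whose neighbourhoods already agree inside $F_1$ except for one vertex, or which agree on $C$. Examples are two vertices of the same class $B_i$, as in \ref{parg-B-com}, or a vertex $v_j$ of the cycle together with an attached vertex that sees $v_{j-1}$ and $v_{j+1}$. Assume for contradiction that $u$ and $w$ are not comparable. Then there are vertices $x\in N(u)\setminus N(w)$ and $y\in N(w)\setminus N(u)$.

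The core of the proof is a case analysis on the class of $x$, and symmetrically of $y$, among $A_j,B_j,D_j,X_j,Y_j,Z$. For each class, the adjacency rules \ref{parg-A}, \ref{parg-B} and \ref{parg-Z} either forbid $x$ from seeing $u$ but not $w$ outright, or fix its adjacencies to the remaining vertices of $F_1$. In the second situation, $x$ together with four suitable vertices of $F_1\cup C$ induces a co-gem or a paraglider. This kills most classes, and \ref{parg-XY} removes the cases where $x$ and $y$ lie in $X\cup Y$, since such sets have size at most one. The surviving cases are those where both $x$ and $y$ lie in classes not controlled by $u,w$, for instance $A_{j}\cup D_{j}$ as in the proof of \ref{parg-B-com}. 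In those cases I would examine the pair $x,y$ together. Depending on whether $xy\in E(G)$, one of $\{u,x,y,w,v_j\}$ or $\{u,x,v_j,y,w\}$ should induce a co-gem or a paraglider, where $v_j$ is a cycle vertex missed by $u$ and $w$.

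The main obstacle is this final mixed case: $x$ and $y$ both lie in $A\cup D$, and possibly in $Z$, which is complete to much of the structure. Here the single-vertex obstructions do not suffice, and I would need to use the specific extra edges of $F_1$ to build the forbidden subgraph. If the first choice of $u,w$ fails in some subcase, I would switch to a different nonadjacent pair inside $F_1$, for example a cycle vertex and the attached vertex dominating its two cycle neighbours. For that pair the attached vertex of $F_1$ supplies the fifth vertex needed for a paraglider.
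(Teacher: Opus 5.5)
Your proposal is a plan rather than a proof. At the one place where it becomes specific, it goes wrong. In the paper, $F_1$ is the $C_5$ together with two nonadjacent vertices $u,w\in B_1$ and a vertex $y\in Y_5$; by \ref{parg-B}:$(iv)$, $y$ is anticomplete to $\{u,w\}$. Your first candidate, two vertices of one class $B_i$, is the right pair. By \ref{parg-A}:$(iii)$, \ref{parg-B} and \ref{parg-Z}, any vertex adjacent to exactly one of $u,w$ lies in $A_3\cup D_3\cup D_4\cup X_1$; in particular $Z$ and $Y$ never distinguish them.

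You claim that \ref{parg-XY} disposes of $X\cup Y$ ``since such sets have size at most one''. It does not. The bound $|X_1|\le 1$ still allows one vertex of $X_1$ adjacent to $u$ but not to $w$. Such a vertex sees $v_1$, $v_3$ and $v_4$, so the co-gem you have in mind is unavailable. This is exactly why \ref{parg-B-com} needs three vertices of $B_1$ and a pigeonhole on $X_1$. With only two vertices you need $X_1=\emptyset$, and that is the missing idea. The extra vertex $y\in Y_5$ of $F_1$ forces $X_1=\emptyset$ by \ref{parg-XY} applied with $i=5$.

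Conversely, the case you single out as the main obstacle is routine. Write $x\in N(u)\setminus N(w)$ and $x'\in N(w)\setminus N(u)$ (using $x'$ for your $y$, since $y$ names the $Y_5$-vertex). Every vertex of $A_3\cup D_3\cup D_4$ is adjacent to $v_3$ and nonadjacent to $v_1$. So if $x,x'$ both lie there, then $\{u,x,v_3,x',v_1\}$ (when $xx'\notin E(G)$) or $\{u,x,x',w,v_1\}$ (when $xx'\in E(G)$) induces a co-gem. Your set $\{u,x,v_j,y,w\}$ would instead be a $P_5$, which is not forbidden. Once $X_1=\emptyset$ is known, this two-sided argument would finish the proof.

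The paper's argument is one-sided and uses $y$ a second time. For any $u'\in N(u)\setminus N(w)\subseteq A_3\cup D_3\cup D_4$, one of $\{v_1,y,v_3,u',w\}$ or $\{v_1,y,u',u,w\}$ induces a co-gem, so $N(u)\subseteq N(w)$. Without recognising the role of the $Y_5$-vertex, your case analysis cannot close. The fallback of switching to another pair, such as a cycle vertex and an attached vertex, is not needed.
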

\begin{proof}
Suppose that $G$ contains an $F_1$, say with vertices and edges as shown in Figure~\ref{fig:F12}. We let $C:= \{v_1,v_2,v_3,v_4,v_5\}$ and we partition $N(C)$ as earlier. Clearly $u,w \in B_1$ and $y\in Y_5$. Since $Y_5\neq \emptyset$, we have $X_1=\emptyset$ (by \ref{parg-XY}). Now we show that $u$ and $w$ are comparable. Suppose to the contrary that there is a vertex, say $u'$, such that $uu'\in E(G)$ and $wu'\notin E(G)$. Then by \ref{parg-A}:(iii) and \ref{parg-B}, we see that $u'\in A_{3}\cup D_{3}\cup D_{4}$.  But then one of $\{v_1,y,v_3,u',w\}$ or $\{v_1,y,u',u, w\}$ induces a co-gem which is a contradiction. So $u$ and $w$ are comparable vertices in $G$.  This proves \cref{parg-F2}.
\end{proof}

 \begin{theorem}\label{parg-main}
  For each $k\in \mathbb{N}$,  there are finitely many $k$-vertex-critical (co-gem, paraglider)-free graphs.
 \end{theorem}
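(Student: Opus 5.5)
We prove that every $k$-vertex-critical (co-gem, paraglider)-free graph $G$ has a number of vertices bounded by a function of $k$. We split into cases according to whether $G$ contains an induced $C_5$.

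\emph{Case 1: $G$ is $C_5$-free.} Since $G$ is co-gem-free, it contains no odd hole $C_{2t+1}$ with $t\ge 3$: such a hole contains a $P_4$ together with a vertex anticomplete to it, which is a co-gem. The complement of $C_7$ (and of larger odd holes) contains $\overline{C_5}$-type configurations, and one checks that it contains a paraglider. For instance, in $\overline{C_7}$ every vertex has degree $4$, and the paraglider is the complement of $P_3+P_2$ (it is $\overline{P_2+P_3}$), while $C_7$ contains $P_2+P_3$. Also $\overline{C_5}=C_5$ is excluded. So $G$ has no odd hole and no odd antihole, and $G$ is perfect by \cref{spgt}. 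A $k$-vertex-critical perfect graph is $K_k$, so this case is finite.

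\emph{Case 2: $G$ contains a $C_5$.} Fix $C$ and the partition $V(G)=C\cup A\cup B\cup D\cup X\cup Y\cup Z$, and bound each part by a function of $k$.
\begin{itemize}
\item By \ref{parg-XY}, $|X\cup Y|\le 10$.
\item By \ref{parg-Z}, $Z$ is a clique, so $|Z|\le k$.
\item By \ref{parg-B-com} and \cref{lem:critical-clqcut-comp}, $|B_i|\le 2$.
\item For $A_i$: if $G[A_i]$ contains a $2P_2$, \cref{lem:parg-A-2P2} gives $|A_i|\le 3k+2^{3k+2}$. Otherwise $G[A_i]$ is ($P_4$, $2P_2$)-free by \ref{parg-A}:$(i)$, and also $C_4$-free, since a $C_4$ in $A_i$ together with $v_i,v_{i+1}$ would create a paraglider. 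So it is a threshold graph, i.e.\ a split graph whose stable part has nested neighbourhoods. Nested neighbourhoods in the stable part give comparable vertices unless that part has size at most one, once we check that the outside neighbourhoods of these vertices are also nested. This check is the delicate step. Then $|A_i|\le k+1$.
\item For $D_i$: I would argue similarly, showing each $D_i$ is $P_4$-free (a $P_4$ in $D_i$ with $v_{i+2}$ would give a co-gem). Then I would either rerun the good-partition argument of \cref{lem:parg-A-2P2} adapted to $D_i$, or show $G[D_i]$ is $2P_2$-free via paraglider configurations with $v_{i-1},v_{i+1}$. Finally I would bound $D_i$ by a clique plus a set of pairwise incomparable vertices whose neighbourhoods lie in the already bounded sets.
\end{itemize}

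The general tool throughout is the following. Once all other parts are bounded by $f(k)$, the vertices of a remaining stable-like part are distinguished only by their traces on a set of size $f(k)$. So at most $2^{f(k)}$ of them can avoid being comparable, and by \cref{lem:critical-clqcut-comp} they must avoid it. \cref{parg-F2} (no induced $F_1$) would be used to forbid the configuration of two $B_1$-vertices with a $Y_5$-vertex when controlling $D$ and $B$ interactions.

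The main obstacle is the $D_i$ sets, for which no lemma is yet stated. I expect the proof to need an analogue of \cref{lem:parg-A-2P2} there, possibly reducing to the case where $G$ is $F_1$-free via \cref{parg-F2}. I would first try to show $D_i$ is complete or anticomplete to most other parts, so that nonadjacent vertices of $D_i$ with equal traces on $A\cup B\cup X\cup Y\cup Z$ become comparable. Summing the bounds then gives $|V(G)|\le g(k)$, proving the theorem.
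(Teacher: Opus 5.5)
Your Case 1 (perfection via \cref{spgt}, using that $C_{2t+1}$, $t\ge 3$, contains a co-gem and contains $P_2+P_3$, whose complement is the paraglider) matches the paper. So do your bounds on $X$, $Y$, $Z$, $B_i$, and your use of \cref{lem:parg-A-2P2} when $G[A_i]$ contains a $2P_2$. However, the proposal is not a proof. The bound on $D$ is missing, and the $2P_2$-free case of $A_i$ rests on a false claim.

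\textbf{The $2P_2$-free case of $A_i$.} $G[A_i]$ need not be $C_4$-free. The vertices $v_i,v_{i+1}$ are complete to $A_i$ and the other three vertices of $C$ are anticomplete to it, so no vertex of $C$ sees exactly three vertices of a $C_4$ in $A_i$. One checks directly that $C$ together with a $C_4$ placed in $A_1$ is (co-gem, paraglider)-free. So $G[A_1]$ can be, e.g., $K_{t,t}$ rather than threshold. Moreover, the ``delicate step'' you postpone is exactly the control of neighbours of $A_1$ in $D_3\cup D_4\cup D_5$, which you do not have.

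The paper avoids any threshold structure by bounding $D$ first. Then a colour class of $A_1$ with more than $2^{|B\cup D\cup X\cup Y\cup Z|}$ vertices contains nonadjacent $a_1,a_2$ with equal traces there. By \ref{parg-A}:$(ii)$, their private neighbours $a_1',a_2'$ must lie in $A_1$. Since $G[A_1]$ is $P_4$-free, $\{a_1,a_1',a_2,a_2'\}$ is a $2P_2$, and \cref{lem:parg-A-2P2} applies.

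\textbf{The bound on $D_i$.} Here your trace tool is circular. Two nonadjacent vertices of $D_1$ with the same trace on $B\cup X\cup Y\cup Z$ can still be separated by vertices of $A_3$, and $A$ is not yet bounded. Meanwhile, bounding $A$ by traces needs $D$ bounded. The internal structure of $D_1$ is not the issue: a $P_2+P_1$ in $D_1$ plus $v_2,v_5$ is a paraglider, so $G[D_1]$ is complete multipartite. The issue is bounding each stable part.

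The missing idea has two steps.
\begin{enumerate}
\item Show that only $A_3\cup D_3\cup D_4$ can separate such a pair. Indeed, $A_1\cup A_5$ is complete to $D_1$ by \ref{parg-A}:$(ii)$. A vertex of $A_2\cup A_4\cup D_2\cup D_5$ mixed on a nonadjacent pair $\{d,d^*\}\subseteq D_1$ gives a co-gem with $v_3,v_4$. A vertex of $D_1\setminus\{d,d^*\}$ mixed on that pair gives a paraglider with $v_2,v_5$.
\item Take \emph{three} vertices $u_1,u_2,u_3$ of one colour class of $D_1$ with equal traces on $B\cup X\cup Y\cup Z$. Let $u_1'$ and $u_2'$ be private neighbours of $u_1$ and $u_2$; they lie in $A_3\cup D_3\cup D_4$. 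Paragliders force $u_3$ to miss them. Then either a co-gem appears, or (say) $u_1'\in D_3$. In the latter case $\{u_1,v_2,v_3,v_4,v_5,u_1',u_2,u_3\}$ is an $F_1$ with respect to the new $5$-cycle $u_1v_2v_3v_4v_5$. Here $u_2,u_3$ play the two $B_1$-vertices and $u_1'$ plays the $Y_5$-vertex, so \cref{parg-F2} yields a comparable pair.
\end{enumerate}
This gives $|D_i|\le k\cdot 2^{|B\cup X\cup Y\cup Z|+1}$ independently of $|A|$. That is where $F_1$ is really used: on a cycle through a vertex of $D_1$, not for $B$-interactions on the original cycle.
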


\begin{proof} Let $G$ be a $k$-vertex-critical (co-gem, paraglider)-free graph. We show that $|V(G)|$ is bounded above by a function of $k$. We may assume that $G$ does not contain a $K_k$ and that $G$ is not perfect. Then   since an odd-hole $C_{2p+1}$ with $p\geq 3$ contains a co-gem and since the complement graph of an odd-hole co-$C_{2p+1}$ with $p\geq 3$ contains a paraglider, by \cref{spgt}, $G$ contains a $C_5$. We may assume such a  $C_5$, say with vertex-set $C:=\{v_1,v_2,\ldots, v_5\}$ and with edge-set
 $\{v_1v_2,v_2v_3,v_3v_4,v_4v_5,v_5v_1\}$. Then we partition $N(C)$ as earlier, and we use the structural properties \ref{parg-A} to \ref{parg-Z}. By \cref{lem:critical-clqcut-comp}, we may assume that $G$ does not contain a clique cutset or a pair of comparable vertices. From \ref{parg-XY}, \ref{parg-B-com}  and \ref{parg-Z}, we conclude that $|X|\leq 5$, $|Y|\leq 5$, $|B|\leq 10$   and $|Z|\leq k$. Thus, $|B\cup X\cup Y\cup Z| \leq k+20$. Next we show that  $|D|$ is bounded above by a function of $k$. To prove the same, we claim the following:

 \begin{claim1}\label{parg-D-A24}
Let $i\in [5]$ and $i$ mod $5$. For any two nonadjacent vertices, say $d$ and $d^*$, in $D_i$, no vertex in $A_{i+1}\cup A_{i+3}\cup (D_i\setminus \{d, d^*\}) \cup D_{i-1}\cup D_{i+1}$ is mixed on $\{d,d^*\}$.
 \end{claim1}
 \noindent{\it Proof of \cref{parg-D-A24}}.~We prove the claim when $i=1$. Suppose to the contrary that there is a vertex, say $p\in A_{2}\cup A_4\cup  (D_1\setminus \{d, d^*\}) \cup D_2\cup D_5$, which is mixed on $\{d,d^*\}$. We may assume that $pd\in E(G)$ and $pd^*\notin E(G)$. Then $\{d, p,v_3,v_4, d^*\}$ induces a co-gem or $\{d,v_2,d^*,v_5,p\}$ induces a paraglider which is a contradiction. So the claim holds. $\sq$

 \begin{claim1}\label{parg-Di-bound}
  For each $i\in [5]$, $|D_i|\leq k\cdot (2^{|B\cup X\cup Y\cup Z|+1}) \leq k\cdot 2^{k+21}$.
  \end{claim1}
 \noindent{\it Proof of \cref{parg-Di-bound}}.~We prove the claim when $i=1$. Since $G[D_1]$ is $k$-colorable, $D_1$ can be partitioned into $k$ stable sets, say $(U_1,U_2,\ldots, U_k)$. We will show that for each $j\in [k]$, $|U_j|\leq  2^{|B\cup X\cup Y\cup Z|+1}$. If $|U_j|>  2\cdot 2^{|B\cup X\cup Y\cup Z|}$ for some $j$, then by the pigeonhole principle, there are three nonadjacent vertices in $U_j$, say $u_1$, $u_2$ and $u_3$, that have the same set of neighbours in  $B\cup X\cup Y\cup Z$.  Since  $u_1$ and $u_2$ are not comparable, there are vertices $u_1'\in N(u_1)\setminus N(u_2)$ and $u_2'\in N(u_2)\setminus N(u_1)$. Clearly  from \ref{parg-A}:$(ii)$ and \cref{parg-D-A24}, we see that $u_1', u_2'\notin (A\setminus A_3) \cup D_1 \cup D_2\cup D_5$.   So we may assume that $u_1',u_2'\in A_3\cup D_3\cup D_4$. Then up to symmetry, we have the following three cases:
 \begin{itemize}[leftmargin=0.65cm] \itemsep=0pt \topsep=0pt
   \item  $u_1',u_2'\in A_3$: Now if $u_3u_1'\in E(G)$, then $\{v_1,u_1,u_1',u_3,v_2\}$ induces a paraglider; so $u_3u_1'\notin E(G)$. Likewise, $u_3u_2'\notin E(G)$. But then one of $\{u_1,u_1',v_4,u_2',u_3\}$ or $\{u_1,u_1',$ $u_2',u_2, u_3\}$ induces a co-gem which is a contradiction.

   \item  $u_1'\in A_3$ and $u_2'\in D_3$: Now from \ref{parg-A}:$(ii)$, we have  $u_1'u_2'\in E(G)$, and as in the previous item, we have $u_3u_1'\notin E(G)$. Also if $u_3u_2'\in E(G)$, then  $\{v_5,u_2,u_2',u_3, v_1\}$ induces a paraglider; so $u_3u_2'\notin E(G)$. But then $\{u_1,u_1',u_2',u_2, u_3\}$ induces a co-gem which is a contradiction.
   \item  $u_1'\in D_3$ and $u_2'\in D_3\cup D_4$:  As in the previous item, we have $u_3u_1'\notin E(G)$. But then $\{u_1,v_2,v_3,v_4,v_5,u_1',u_2,u_3\}$ induces an $F_1$, and hence from \cref{parg-F2}, $G$ has  a pair of comparable vertices which is a contradiction to our assumption that $G$ has no pair of comparable vertices.
 \end{itemize}
 This proves \cref{parg-Di-bound}. $\sq$

  \smallskip
  Thus from \cref{parg-Di-bound}, $|D|$ is bounded above by a function of $k$,  that is, $|D|\le 5k\cdot 2^{k+21}$. So it is enough to show that $|A|$ is bounded above by a function of $k$. First note that $|B\cup D\cup X\cup Y\cup Z| \leq  (5k\cdot 2^{k+21})+k+20$. We let  $\lambda:=(5k\cdot 2^{k+21})+k+20$.

  \begin{claim1}\label{parg-Ai-bound}
  For each $i\in [5]$,  $|A_i|\leq \max\{3k+2^{3k+2},~ 2^{\lambda}\}.$
  \end{claim1}
\noindent{\it Proof of \cref{parg-Di-bound}}.~We prove the claim when $i=1$.  Since $G[A_1]$ is $k$-colorable, $A_1$ can be partitioned into $k$ stable sets, say $(V_1,V_2,\ldots, V_k)$. We will show that  $|V_j|\leq   2^{|B\cup D\cup X\cup Y\cup Z|}$ for each $j\in [k]$  or $|A_1|\leq 3k+2^{3k+2}$. Suppose that $|V_j|>  2^{|B\cup D\cup X\cup Y\cup Z|}$ for some $j\in [k]$. Then by the pigeonhole principle, there are nonadjacent vertices in $V_j$, say $a_1$ and $a_2$, that have the same set of neighbours in  $B\cup D\cup X\cup Y\cup Z$.  Since  $a_1$ and $a_2$ are not comparable, there are vertices $a_1'\in N(a_1)\setminus N(a_2)$ and $a_2'\in N(a_2)\setminus N(a_1)$.  So from \ref{parg-A}:(ii), we have $a_1',a_2'\in A_1$. Then from \ref{parg-A}:$(i)$, since $G[A_1]$ is $P_4$-free, $\{a_1,a_1',a_2,a_2'\}$ induces a $2P_2$. Hence from \cref{lem:parg-A-2P2}, we have $|A_1|\leq 3k+2^{3k+2}$.  So the claim holds. $\sq$

 \smallskip
  Thus from \cref{parg-Ai-bound}, $|A|$ is bounded above by a function of $k$, and hence  $|V(G)|$ is  bounded above by a function of $k$. This completes the proof of \cref{parg-main}.
\end{proof}

From \cref{parg-main} and from \cref{kcol-ptime}, we have the following.

    \begin{corollary}
        For each $k\in \mathbb{N}$, there is a polynomial-time certifying algorithm for $k$-{\sc Coloring} of (co-gem, paraglider)-free graphs.
    \end{corollary}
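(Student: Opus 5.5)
The corollary follows by combining \cref{parg-main} with \cref{kcol-ptime}, much as \cref{thm:finite-imp-P} is used, but with a certificate attached to each answer. Fix $k$. By \cref{parg-main} applied with $k+1$ in place of $k$, there is a finite list $\mathcal{L}_k$ of all $(k+1)$-vertex-critical (co-gem, paraglider)-free graphs. Let $N_k$ be the maximum order of a graph in $\mathcal{L}_k$. Both $\mathcal{L}_k$ and $N_k$ depend only on $k$, so they can be hard-wired into the algorithm. We do not need to know $\mathcal{L}_k$ explicitly; the bound $N_k$ suffices.

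The algorithm on input $G$ is as follows. First run the polynomial-time algorithm of \cref{kcol-ptime}. If it reports that $G$ is $k$-colorable, we need an actual $k$-coloring as the certificate. I would obtain one by the standard self-reduction. Repeatedly pick two nonadjacent vertices $u,v$ and try to identify them, or equivalently add a complete-to-a-new-clique gadget. However, identification may destroy co-gem-freeness, so a safer route is needed. I would instead assume that the algorithm of \cite{CGKP2015} outputs a coloring, as it is constructive (it is a dynamic programming/enumeration algorithm that produces a coloring). I would state this as a remark.

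If $G$ is not $k$-colorable, then $\chi(G)\ge k+1$. Deleting vertices one at a time while $\chi$ stays at least $k+1$ yields an induced subgraph $H$ with $\chi(H)\ge k+1$ and $\chi(H-v)\le k$ for every $v\in V(H)$. Such an $H$ is exactly $(k+1)$-vertex-critical, since removing a vertex drops $\chi$ by at most one. Because $H$ is an induced subgraph of $G$, it is (co-gem, paraglider)-free, so $|V(H)|\le N_k$ by \cref{parg-main}. Consequently, it suffices to enumerate all vertex subsets $S\subseteq V(G)$ with $|S|\le N_k$. For each such $S$, we test by brute force (in constant time depending only on $k$) whether $G[S]$ is $(k+1)$-vertex-critical. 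This takes $O(|V(G)|^{N_k})$ time, which is polynomial for fixed $k$. The search is guaranteed to succeed, and the output subgraph is checkable in polynomial time.

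The only real obstacle is the yes-branch: producing the coloring itself rather than a yes/no answer. I would rely on the constructive nature of \cite{CGKP2015}, as the paper implicitly does. Failing that, I would use a self-reduction that preserves the class. For example, I would extend a partial precoloring vertex by vertex, provided a list-coloring version of \cref{kcol-ptime} is available. The no-branch is routine given \cref{parg-main}.
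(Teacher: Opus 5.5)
Your proposal is correct and follows the paper, which derives the corollary by combining \cref{parg-main} (applied to $k+1$, so that any $(k+1)$-vertex-critical induced subgraph has bounded order and can be found by brute-force enumeration) with \cref{kcol-ptime}. Your yes-branch needs no extra assumption: the result of Couturier et al.\ is stated for List $k$-Coloring of $(P_5+rP_1)$-free graphs, so your list-based self-reduction (shrink one vertex's list to a single color at a time, never modifying the graph) produces the $k$-coloring.
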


\section{Vertex-critical (co-gem, dart)-free graphs}\label{sec:dart-free}

In this section, we show that for each $k\in \mathbb{N}$, there are finitely many $k$-vertex-critical (co-gem, dart)-free graphs by analysing the structure of such graphs. We use the following lemma which is a slight modification of Lemma 4 of \cite{XJGH2025}, and we give a proof here for completeness.

\setcounter{claim}{0}

\begin{lemma}\label{lem:dart-free}
   Let  $G$ be a dart-free graph. Let $S$ and $T$ be two disjoint subsets of $V(G)$ such that $|S|$ is bounded (say $|S|\leq c$ for some $c\in \mathbb{N}$), $S$ is not complete to $T$, and $G[T]$ is co-connected. Suppose that the following two conditions hold:
    \begin{enumerate}[topsep=0pt, noitemsep]
        \item[(I)]  For any two nonadjacent  vertices, say $x_1$ and $x_2$, in $T$, no vertex in $S$ is anticomplete to $\{x_1, x_2\}$.
        \item[(II)]  There exists a vertex, say $y\in V(G)\setminus (S\cup T)$, which is complete to $S\cup T$.
    \end{enumerate}
    Then $|T|$ is bounded above by a function of $\chi(G[T])$.
\end{lemma}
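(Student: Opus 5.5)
The plan is to use only condition (II) and the co-connectivity of $G[T]$, via Olariu's characterisation of paw-free graphs. The hypotheses on $S$ (its bounded size, that it is not complete to $T$, and condition (I)) appear not to be needed for the bound itself. If that is right, we even get the explicit bound $|T|\le 2\chi(G[T])$.

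First we identify what dart-freeness forbids in a neighbourhood. The dart consists of a vertex adjacent to four others, and among those four the only edges form an induced $P_3$; so the dart is $K_1$ joined to $P_3+P_1$. Since by (II) the vertex $y$ is complete to $T$ and $G$ is dart-free, $G[T]$ must be $(P_3+P_1)$-free. Equivalently, $\overline{G[T]}$ is paw-free, because the complement of $P_3+P_1$ is the paw. By hypothesis $G[T]$ is co-connected, so $\overline{G[T]}$ is a connected paw-free graph.

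Next we apply Olariu's theorem: every connected paw-free graph is either triangle-free or complete multipartite. This splits the argument into two cases.
\begin{enumerate}
\item If $\overline{G[T]}$ is complete multipartite, then $G[T]$ is a disjoint union of cliques. A disjoint union of two or more cliques is not co-connected, since its complement is complete multipartite with at least two parts and hence connected only in the trivial case. So $G[T]$ is a single clique, and $|T|=\chi(G[T])$.
\item If $\overline{G[T]}$ is triangle-free, then $G[T]$ has no stable set of size three. Each colour class in an optimal colouring of $G[T]$ is a stable set, so each has at most two vertices, giving $|T|\le 2\chi(G[T])$.
\end{enumerate}
In both cases $|T|\le 2\chi(G[T])$, which proves the lemma.

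The only step needing care is invoking Olariu's characterisation correctly and checking the degenerate case of the complete multipartite alternative. If one prefers to avoid citing Olariu, I would argue directly instead. Take a vertex $t$ and a non-neighbour $t'$ of $t$ in $T$. Then use the $(P_3+P_1)$-freeness of $G[T]$ together with a path from $t$ to $t'$ in $\overline{G[T]}$ (which exists by co-connectivity) to show that $G[T]$ cannot contain three pairwise nonadjacent vertices unless $G[T]$ splits into cliques. This again yields stable sets of size at most two and the same bound.
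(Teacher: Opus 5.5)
Your Case~1 is backwards, and this is a genuine gap. The complement of a disjoint union of $m\ge 2$ nonempty cliques is the complete $m$-partite graph, which is connected. So a disjoint union of two or more cliques \emph{is} co-connected, while a single clique on at least two vertices is \emph{not}.

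Co-connectivity of $G[T]$ therefore places no limit on the number of cliques, and the case you dismiss is exactly where the content of the lemma lies. Your remark that the hypotheses on $S$ are unnecessary is also false. Take $G=K_{1,n}$, with $y$ its centre and $T$ its $n$ leaves. Then $G$ is dart-free, $y$ is complete to $T$, $G[T]$ is co-connected (its complement is $K_n$), and $\chi(G[T])=1$, yet $|T|=n$. Your Olariu-free alternative fails in the same way, since it also ends with ``unless $G[T]$ splits into cliques''.

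The approach is easy to repair, and the repaired version is a genuinely different and sharper argument than the paper's. In Case~1, write $G[T]$ as a disjoint union of cliques $Q_1,\dots,Q_m$.
\begin{itemize}
\item Since $S$ is not complete to $T$, some $s\in S$ has a non-neighbour $t\in T$, say $t\in Q_j$.
\item By (I), the non-neighbours of $s$ in $T$ are pairwise adjacent. Hence they all lie in $Q_j$, and $s$ is complete to $T\setminus Q_j$.
\item Suppose $m\ge 3$, and pick $t'$ and $t''$ in two other cliques. Then $t'st''$ is an induced $P_3$, $t$ is anticomplete to $\{s,t',t''\}$, and $y$ is complete to all four. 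So $\{t',s,t'',t,y\}$ induces a dart.
\end{itemize}
Thus $m\le 2$ and $|T|\le 2\chi(G[T])$. Together with your correct Case~2, this proves the lemma with a linear bound. It uses only one vertex of $S$, so $|S|\le c$ is indeed superfluous, but (I) and the non-completeness of $S$ to $T$ are not.

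The paper instead runs a breadth-first layering in the complement of $G[T]$ starting from $S$. It uses $y$ and dart-freeness to propagate condition (I) from one layer to the next. Each layer is then bounded by $|S|$ times a power of $\chi(G[T])$, and the number of layers by about $2\chi(G[T])$. This gives an exponential bound that depends on $c$. It avoids citing Olariu's theorem but is much weaker than what the repaired version of your argument gives.
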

\begin{proof}
    Assume the hypotheses. We let $N_0 := S$, and for $i\ge 1$, we let $N_i:= \{u\in T\setminus (\bigcup_{j=0}^{i-1} N_j) \mid u \text{ has a nonneighbor in } N_{i-1}\}$. Since $S$ is not complete to $T$, clearly $N_1\neq \emptyset$. Then we claim the following.

   \begin{claim}\label{Nij}(a) For all $i,j\ge 1$, each $N_i$ is complete to $N_{j}$ for all $j\notin \{i-1,i+1\}$. (b)  There exists an integer $\ell$ such that $N_j = \emptyset$ for all $j\ge \ell$.
   \end{claim}

    \noindent{\it Proof of \ref{Nij}}:~Clearly $(a)$ follows from the definition of $N_i$'s. To prove $(b)$, we show that $\ell \le  2\chi(G[T])$. If $N_{2\chi(G[T])}\neq\emptyset$, then for each $i \leq  2\chi(G[T])$, pick a vertex, say $v_i\in N_i$, and then we see that $\{v_0, v_2, v_4, \ldots, v_{2\chi(G[T])}\}$ induces a clique of size $\chi(G[T])$+1 (by $(a)$), which is a contradiction. So there exists an integer $\ell \leq  2\chi(G[T])$ such that, for all $j\ge l$, $N_j$ = $\emptyset$.

 \begin{claim}\label{Nii+1} For all $i\ge 0$ and for any two nonadjacent vertices, say $z_1$ and $z_2$, in $N_{i+1}$, no vertex in $N_i$ is anticomplete to $\{z_1, z_2\}$.
 \end{claim}
     \noindent{\it Proof of \ref{Nii+1}}:~Clearly the claim is true for $i$ = 0 by our hypothesis (I). Suppose that the claim is not true for $i > 0$. So there is a vertex, say $v\in N_i$, which is anticomplete to $\{z_1,z_2\}$.  By the definition of $N_i$, there is a vertex, say $v' \in N_{i-1}$, which is nonadjacent to $v$. But then by using our hypothesis (II) and \ref{Nij}:$(a)$,  we see that $\{z_1,v',z_2,y,v\}$ induces a dart which is a contradiction. $\sq$

 \begin{claim}\label{Ni-bound} For all $i\ge 0$, we have $|N_i|\leq c~ (\chi(G[T]))^{i}$.
 \end{claim}
     \noindent{\it Proof of \ref{Ni-bound}}:~The proof is by induction on $i$. Since $|S|\le c$, the claim is true for $i = 0$. So we may assume that  $i\ge 1$. Note that since $\chi(G[N_i])\leq \chi(G[T])$, each $N_i\subseteq T$ can be partitioned into at most $\chi(G[T])$ stable sets. However, by \ref{Nii+1}, any stable set of $N_i$ can have at most one nonneighbor of $v$, where $v$ is any vertex in $N_{i-1}$. Thus, any stable set of $N_i$ can have at most $|N_{i-1}|$ vertices. This implies that $|N_i|\leq |N_{i-1}|~\chi(G[T])$, and hence $|N_i|\le c~(\chi(G[T]))^{i}$. This proves \cref{Ni-bound}.  $\sq$

\smallskip
  Now since $G[T]$ is co-connected, the conclusion of the lemma follows from the definition of $N_i$'s, $i\geq 1$ and by \ref{Nij}  and \ref{Ni-bound}.
    \end{proof}

We  first prove a few lemmas on the structure of  $($co-gem, dart$)$-free graphs.

\begin{lemma}\label{lem:str-prime-dart}
    If $G$ is a prime $($co-gem, dart$)$-free graph, then $G$ is $F_2$-free.
\end{lemma}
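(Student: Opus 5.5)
The plan is to argue by contradiction. Suppose $G$ is prime and (co-gem, dart)-free but contains an induced $F_2$ on a vertex-set $F$, labelled as in Figure~\ref{fig:F12}. I expect $F_2$ to have the same flavour as $F_1$: a $C_5$ on $C:=\{v_1,\ldots,v_5\}$ together with a few attached vertices, among them two nonadjacent vertices $u,w$ with the same neighbourhood in $F$. I will then use the fact that $\{u,w\}$, and every set obtained by growing it, cannot be a homogeneous set of $G$. Since $G$ is co-gem-free and contains a $C_5$, every vertex outside $C$ has at least two neighbours in $C$. So, as in the paraglider section, I will partition $V(G)\setminus C$ into the classes $A_i,B_i,D_i,X_i,Y_i,Z$. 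The first step is to record a short list of adjacency rules for these classes that follow from forbidding a co-gem or a dart. These are analogues of \ref{parg-A}--\ref{parg-Z}, and I will prove only the ones involving the classes that contain the vertices of $F\setminus C$.

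Next I grow a module. Let $M$ be the smallest set containing $\{u,w\}$ that is closed under the following rule: whenever a vertex outside $M$ is mixed on $M$, add it to $M$. By primeness this process must eventually reach all of $V(G)$. I will instead show that it stops at a proper subset, which contradicts primeness. The key invariant is that every vertex added to $M$ lies in one restricted family of classes. For the pair $u,w$ in $F_1$, that family was $A_{3}\cup D_{3}\cup D_{4}$ relative to $B_1$, and here it should be similar. In addition, every added vertex must have the same adjacency to the fixed vertices of $F$ outside $M$, in particular to $C$ and to the extra vertex playing the role of $y$. Each time a vertex $x$ is mixed on the current $M$, I pick $m\in M$ adjacent to $x$ and $m'\in M$ nonadjacent to $x$. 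I then form a five-vertex set from $x$, $m$, $m'$ and two vertices of $F\setminus M$, and show that it induces a co-gem or a dart unless $x$ falls into the allowed family with the prescribed adjacencies. This is the same style of argument as the proof of \cref{parg-F2}, where $\{v_1,y,v_3,u',w\}$ or $\{v_1,y,u',u,w\}$ gave a co-gem. The dart should play the part that the paraglider played in the previous section. It will come from the vertex of $F_2$ that is complete to most of $C$, together with two adjacent vertices on $C$ and a pendant vertex.

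Once the invariant holds, $M$ misses at least one vertex of $F$, for instance a vertex of $C$ that is anticomplete to the whole family. Therefore $1<|M|<|V(G)|$, and $M$ is a homogeneous set, contradicting primeness. The main obstacle is the induction step of the invariant. Vertices added later are no longer twins of $u$ or $w$, so the witnesses $m,m'$ can be arbitrary members of $M$. I must therefore maintain more than class membership, namely that $M$ stays anticomplete, respectively complete, to specific vertices of $F$. I will first try to carry the stronger hypothesis ``every vertex of $M$ has exactly the same neighbours in $F\setminus M$ as $u$''. If the co-gem/dart case analysis does not close under that hypothesis, I will enlarge $F$ by the first distinguishing vertex and repeat the argument.
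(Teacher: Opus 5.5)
There is a genuine gap, and it starts with the shape of $F_2$. You can read $F_2$ off its use in the proof of \ref{dart-c5-Vi}: it is a $P_2+P_1$ inside $V_i$ together with $v_{i-1},v_{i+1},v_{i+2}$. In the labelling of the paper's proof, its vertices are $a_1,a_2,b_1,b_2,b_3,b_4$. Here $a_1a_2$ is an edge, $\{a_1,a_2,b_2\}$ is complete to the nonadjacent pair $\{b_1,b_3\}$, $b_4$ is adjacent only to $b_3$, and there are no other edges. This graph has no $C_5$: removing the pendant vertex $b_4$ leaves five vertices and seven edges. Its twin pair $\{a_1,a_2\}$ is adjacent, not nonadjacent. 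So the set-up you build on is not available: a $C_5$ inside $F$, the partition into $A_i,B_i,D_i,X_i,Y_i,Z$ around it, a nonadjacent twin pair $u,w$, and an invariant modelled on $A_3\cup D_3\cup D_4$. The hypotheses of the lemma give you no $C_5$ to partition around.

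Independently of this, the decisive step is never carried out, namely that the closure $M$ stops at a proper subset. You name it as the main obstacle. Your fallback of enlarging $F$ by a distinguishing vertex and repeating comes with no reason to terminate. As written, the proposal is a strategy rather than a proof.

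The missing observation is that nothing needs to be grown: no vertex is mixed on $\{a_1,a_2\}$ itself. Let $v$ be adjacent to $a_1$ but not to $a_2$.
\begin{enumerate}
\item If $v$ misses both $b_1$ and $b_3$, then $\{b_1,a_2,b_3,a_1,v\}$ is a dart with dominating vertex $a_1$. So $v$ sees $b_1$ or $b_3$.
\item If $vb_2\notin E(G)$, then $\{a_2,a_1,v,b_1,b_2\}$ or $\{a_2,a_1,v,b_3,b_2\}$ is a dart with dominating vertex $b_1$ or $b_3$. So $vb_2\in E(G)$.
\item If $vb_4\notin E(G)$, then $\{a_2,a_1,v,b_2,b_4\}$ is a co-gem. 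So $vb_4\in E(G)$.
\item Now $vb_1\in E(G)$ makes $\{a_1,b_1,b_2,v,b_4\}$ a dart with dominating vertex $v$. Also $vb_3\in E(G)$ makes $\{b_2,v,b_4,b_3,a_2\}$ a dart with dominating vertex $b_3$. By the first step one of these must occur, a contradiction.
\end{enumerate}
By symmetry in $a_1$ and $a_2$, the set $\{a_1,a_2\}$ is homogeneous, contradicting primeness.

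Your instinct to play primeness against a twin pair of $F_2$ is the right one. It has to be applied to the actual adjacent twin pair, and then it closes in one step.
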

\begin{proof}
    Let $G$ be a prime (co-gem, dart)-free graph. Suppose to the contrary that $G$ contains an    $F_2$, say with vertices and edges as shown in Figure~\ref{fig:F12}. Since $\{a_1,a_2\}$ is not a homogeneous set, there is a vertex, say $v\in V(G)\setminus V(F_2)$, which is adjacent to $a_1$  but not to $a_2$. Then since $\{b_1,a_2,b_3,a_1,v\}$ does not induce a dart, $v$ must be adjacent to at least one of $b_1$ or $b_3$. So $vb_2\in E(G)$; for otherwise, $\{a_2,a_1,v,b_1,b_2\}$ or $\{a_2,a_1,v,b_3,b_2\}$ induces a dart, and hence $vb_4\in E(G)$; for otherwise $\{a_2,a_1,v,b_2,b_4\}$ induces a co-gem.    Now  if $vb_1\in E(G)$, then $\{a_1,b_1,b_2,v,b_4\}$ induces a dart, and if $vb_3\in E(G)$, then $\{b_2$, $v$, $b_4$, $b_3$, $a_2\}$  induces a dart which are  contradictions. Thus  we conclude that $G$ is $F_2$-free. This proves \cref{lem:str-prime-dart}.
\end{proof}

\setcounter{claim}{0}

\begin{lemma}\label{lem:antihole}
Let $G$ be a connected (gem, co-dart, $C_5$)-free graph. If $G$ contains a $C_{2p+1}$, where $p\ge 3$,  then $G$ is triangle-free.
\end{lemma}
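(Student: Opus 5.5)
The plan is to argue by contradiction. Suppose $G$ contains a hole $H$ with vertex-set $\{c_0,c_1,\ldots,c_{2p}\}$ (indices mod $2p+1$, $p\ge 3$) and also a triangle. Recall that the co-dart is the complement of the dart, namely a paw (a triangle with one pendant vertex) together with an isolated vertex. This is the forbidden configuration I would exploit most, since $H$ has at least seven vertices and so always supplies vertices far from any small local configuration. I may choose $H$ to be a shortest odd hole of length at least $7$; this gives extra rigidity, since a vertex creating a shorter odd hole is then excluded.

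\textbf{Step 1: attachments to $H$.} I would determine $N(v)\cap V(H)$ for $v\notin V(H)$.
\begin{itemize}
\item If $v$ is adjacent to four consecutive vertices of $H$, then $v$ together with them induces a gem.
\item If $v$ is adjacent to $c_i,c_{i+1}$ but not to $c_{i+2}$, then $\{v,c_i,c_{i+1},c_{i+2}\}$ induces a paw. A hole vertex among $c_{i+4},\ldots,c_{i-2}$ that is nonadjacent to $v$ (one exists, since $|V(H)|\ge 7$ and $v$ is not complete to $H$) completes a co-dart. The symmetric case is identical.
\end{itemize}
So $v$ has no two consecutive neighbours on $H$, i.e.\ $N(v)\cap V(H)$ is a stable set. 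Next I would use $C_5$-freeness, and the minimality of $H$, to forbid neighbours of $v$ at distance $2$ or $3$ along $H$ in the relevant configurations. The aim is to show that every vertex outside $H$ has very few neighbours on $H$, ideally at most one, or a set of pairwise far-apart neighbours that would create a shorter odd hole.

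\textbf{Step 2: a triangle meeting $H$ or $N(H)$.} Since $H$ is induced and, by Step 1, no outside vertex has two adjacent neighbours on $H$, a triangle can meet $H$ in at most one vertex. Suppose the triangle is $c_i x y$ with $x,y\notin V(H)$. Then $c_{i+1}$ is nonadjacent to $x$ and $y$, so $\{x,y,c_i,c_{i+1}\}$ induces a paw. Co-dart-freeness then forces every hole vertex at distance at least $2$ from $c_{i+1}$, other than $c_i$, to be adjacent to $x$ or $y$. By the same argument using $c_{i-1}$ in place of $c_{i+1}$, this holds for the vertices far from $c_{i-1}$ as well. Thus $x$ and $y$ together dominate most of $H$, which contradicts the Step 1 restriction on attachments, since it produces either consecutive neighbours or a $C_5$ or shorter odd hole through $x$ or $y$. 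A similar paw-plus-far-vertex argument handles a triangle lying in $N(H)$: take a triangle vertex with a neighbour on $H$ as the paw pendant, or use a hole vertex as the pendant.

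\textbf{Step 3: an arbitrary triangle, using connectivity.} Choose a triangle $T$ minimizing its distance to $H$, and take a shortest path $P$ from $T$ to $H$. If the distance is at least $2$, then the last vertex $t\in T$ on $P$ has a neighbour $w$ on $P$ outside $T$, and the other two vertices of $T$ are nonadjacent to $w$ by minimality; so $T\cup\{w\}$ induces a paw. By the choice of $T$, a vertex of $H$ is anticomplete to $T\cup\{w\}$, which gives a co-dart. Hence the distance is at most $1$, and Step 2 applies.

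I expect the main obstacle to be the distance-$1$ case in Step 2 and the precise attachment analysis in Step 1. There, pairs of neighbours at distance $2$ or $3$ along $H$ must be excluded carefully using $C_5$-freeness and the minimality of $H$; this is where the hypotheses that $G$ is gem-free and $C_5$-free really enter.
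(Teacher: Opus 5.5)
Your outline follows the paper's route. You restrict how a vertex outside a shortest odd hole $H$ (of length at least $7$) attaches to $H$, rule out triangles near $H$ with a paw plus a far hole vertex, and then propagate using connectivity. Your Step 3 (a triangle at minimum distance from $H$, with the next vertex of a shortest path as pendant) is a clean equivalent of the paper's layer-by-layer argument that $N(v)$ is stable for every $v$ in the $j$-th distance layer. However, two steps do not go through as written.

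\emph{Step 1: the isolated vertex.} Its existence does not follow from ``$|V(H)|\ge 7$ and $v$ is not complete to $H$''. Take $p=3$, $i=0$ and $N(v)\cap V(H)=\{c_0,c_1,c_4,c_5\}$: the whole candidate set $\{c_4,c_5\}$ lies in $N(v)$. For $p\ge 4$ the candidate set contains four consecutive hole vertices, so the gem rescues you, but not for $p=3$. You need a second co-dart for the case that $v$ is adjacent to both $c_{i+4}$ and $c_{i+5}$. Use $\{c_{i+4},c_{i+5},v,c_i,c_{i+2}\}$, with triangle $vc_{i+4}c_{i+5}$, pendant $c_i$ and isolated vertex $c_{i+2}$. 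This is exactly how the paper closes its first claim.

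\emph{The attachment statement.} What you need is not ``at most one neighbour''. That is false: a false twin of $c_{j+1}$ has hole neighbours $c_j,c_{j+2}$. The correct statement is $N(v)\cap V(H)\subseteq\{c_j,c_{j+2}\}$ for some $j$, and it follows from the absence of consecutive neighbours by parity. The arcs of $H$ between cyclically consecutive neighbours of $v$ have lengths at least $2$ and sum to $2p+1$, so some arc has odd length $d\ge 3$. With $v$ it forms a hole of odd length $d+2$, and $C_5$-freeness plus minimality of $H$ force $d+2\ge 2p+1$. That leaves only one other arc, of length $2$. Your later steps depend on this exact form:
\begin{itemize}
\item For a triangle $c_ixy$, Step 2 becomes the single co-dart $\{x,y,c_i,c_{i-1},c_{i+3}\}$.
\item For a triangle $T\subseteq N(H)$, first note that no two vertices of $T$ share a hole neighbour, since otherwise a triangle meets $H$. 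Pick $u\in V(H)$ anticomplete to $T$; it exists because at most six hole vertices have a neighbour in $T$. Then pick $w\in N(T)\cap V(H)$ nonadjacent to $u$; there are at least three candidates and $u$ has only two hole neighbours. Now $T\cup\{w,u\}$ is a co-dart. Your phrase ``take a triangle vertex with a neighbour on $H$ as the paw pendant'' does not make sense.
\item Step 3 only reduces to distance at most $1$. So you must also treat a triangle $T$ with one vertex $t\in N(H)$ and another vertex having no hole neighbour. Take a hole neighbour $h$ of $t$ as pendant; if $h$ sees another vertex of $T$, a triangle meets $H$. For the isolated vertex, take any hole vertex other than $h$, its two hole neighbours and the hole neighbours of $T$. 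This excludes at most six vertices, so one exists. This is the $j=1$ case of the paper's last claim.
\end{itemize}
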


\begin{proof}
Let $G$ be a connected (gem, co-dart, $C_5$)-free graph. Suppose that $p$ is the least integer such that $G$ contains an odd hole of length $2p+1$ where $p\ge 3$,  say with vertex-set $C:=\{v_1,v_2,\dots, v_{2p+1}\}$ and edge-set $\{v_1v_2,v_2v_3, \ldots, v_{2p}v_{2p+1}, v_{2p+1}v_1\}$.  Then we claim the following.

 \begin{claim}\label{dart-c7-no2c}
  For all $i\in [2p+1]$ and $i$ mod $2p+1$,  $N(v_i)\cap N(v_{i+1})$ is empty.
  \end{claim}
\noindent{\it Proof of \cref{dart-c7-no2c}}.~By  symmetry, we let $i = 1$. Suppose to the contrary that there is a vertex in  $N(v_1)\cap N(v_2)$, say $x$. Then since $\{v_{2p+1},v_1,v_2,v_3,x\}$ does not induce a gem,  $x$ is nonadjacent to $v_{2p+1}$ or $v_3$. Again by symmetry, we may assume that   $xv_{2p+1}\notin E(G)$. Now if $x$ is nonadjacent to $v_{2p-1}$ (resp., $v_{2p-2}$), then $\{x,v_2,v_1,v_{2p+1},v_{2p-1}\}$ (resp., $\{x,v_2,v_1,v_{2p+1},v_{2p-2}\}$) induces a co-dart; so
$xv_{2p-1}\in E(G)$ and $xv_{2p-2}\in E(G)$. But then $\{v_{2p-1},v_{2p-2},x,v_2,v_{2p+1}\}$ induces a co-dart which is a contradiction. So \cref{dart-c7-no2c} holds.  $\sq$

\begin{claim}\label{dart-c7-2nc}If $x\in N(C)$, then $N(x) \cap C \subseteq \{v_j, v_{j+2}\}$ for some $j \in [2p+1]$ and $j$ mod $2p+1$.
\end{claim}
  \noindent{\it Proof of \cref{dart-c7-2nc}}.~Let $x\in N(C)$. Suppose that $x$ has at least two neighbours in $C$, and we let $xv_1\in E(G)$.  From \cref{dart-c7-no2c}, $x$ is not adjacent to two consecutive vertices of $C$.  Note that for each $t$, $G[C]-v_t$ is an induced path with even number of vertices. Thus, if there is a  vertex in $N(x)\cap C$, say $v_k\neq v_1$  (if exists), then $G[C]-\{v_1,v_k\}$ is the union of two disjoint paths of different parity on the number of vertices. So we can conclude that, $G[C\setminus N(x)]$ always has an induced path with even number of vertices, say $P$. If $N(x)\notin \{\{v_1\}$, $\{v_{2p}, v_1\}, \{v_1, v_{3}\}\}$, then $P\cup \{x\}$ induces an odd hole of length less than $2p+1$ which is a contradiction to our choice of $p$ or $P\cup \{x\}$ induces a $C_5$ which is a contradiction to the fact that $G$ is $C_5$-free. So \cref{dart-c7-2nc} holds. $\sq$

\begin{claim}\label{dart-c7-k3free} $G[C\cup N(C)]$ is triangle-free.
\end{claim}
\noindent{\it Proof of \cref{dart-c7-k3free}}.~Suppose to the contrary that  $G[C\cup N(C)]$ contains a triangle. We choose a triangle  in $G$, say $K$, such that $|C\cap V(K)|$ is minimum.  Let $V(K):=T_0:= \{x,y,z\}$. Note that $|C\cap T_0|\in \{0,1\}$, by \cref{dart-c7-no2c}.
 If $C\cap T_0\neq \emptyset$, say $x:= v_1$ and $y,z \in N(C)$, then  from \cref{dart-c7-2nc}, $\{y,z,x,v_{2p+1}, v_4\}$ induces a co-dart; so $T_0 \subseteq N(C)$, and no two vertices in $T_0$ have a common neighbour in $C$ (by our choice of $K$). From \cref{dart-c7-2nc},  there is a vertex, say $u$, in $C$, which is anticomplete to $\{x,y,z\}$. Moreover, since   no two vertices in $T_0$ have a common neighbour in $C$, there is a vertex in $N(T_0)\cap C$, say $w$, which is nonadjacent to $u$. Now $\{x,y,z,w,u\}$ induces a co-dart which is a contradiction. So \cref{dart-c7-k3free} holds. $\sq$

\smallskip
    To proceed further, we let $N_0 := C$, and for each $i\in \mathbb{N}$, we let $N_i :=  \{v\in V(G)\setminus (\cup_{j=0}^{i-1}~ N_j) \mid  v $ has a neighbour in $ N_{i-1}\}$. Note that $N_1:= N(C)$. Then we prove the following.

\begin{claim}\label{dart-c7-nind}For all $j\ge 0$ and for each $v\in N_j$, $N(v)$ is a stable set.
\end{claim}
 \noindent{\it Proof of \cref{dart-c7-nind}}.~The proof is by induction on $j$. From \cref{dart-c7-k3free}, the claim holds when $j = 0$. Suppose to the contrary that the claim is not true for $j=1$.  Then there is a vertex, say $v\in N_1$, and there are two adjacent vertices, say $u$ and $w$, in $N(v)$. Then from \cref{dart-c7-k3free}, $u$ or $w$ must be in $N_2$. We may assume that $w\in N_2$ and $v\in N(v_1)$. Now if $uv_4\notin E(G)$, then from \cref{dart-c7-2nc}, $\{u,w,v,v_1,v_4\}$ induces a co-dart, and if $uv_4\in E(G)$, then again from \cref{dart-c7-2nc}, $\{v_4,u,v,w,v_{2p+1}\}$ induces a co-dart each of which is a contradiction. Hence the claim is true for $j = 1$. If possible, let $\ell\ge 2$ be the smallest integer for which the claim is not true. Then there is a vertex, say $u_1\in N_{\ell}$, such that there are vertices, say $u_2$, $u_3\in N(u_{1})$, which are adjacent. Let $w_1 \in N_{\ell-1}$ be a neighbour of $u_1$. Then by the choice of $\ell$, $w_1$ is anticomplete to $\{u_2, u_3\}$. Since $\ell\ge 2$, from \cref{dart-c7-2nc}, there is a vertex, say $w_2 \in C$, which is nonadjacent to $w_1$. Then  $\{u_2,u_3,u_1,w_1,w_2\}$ induces a co-dart which is a contradiction. So  \cref{dart-c7-nind} holds. $\sq$

\smallskip
    Since $G$ is connected, the result follows from \cref{dart-c7-nind}.
\end{proof}

\begin{lemma}\label{lem:contain_C5}
    Let $G$ be a  prime  $($co-gem, dart$)$-free graph such that $\chi(G)\leq k$. Suppose that $G$ contains a $C_5$. Then $|V(G)|$ is bounded above by a function of $k$.
\end{lemma}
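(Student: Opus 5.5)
We fix a $C_5$ with vertex-set $C:=\{v_1,\dots,v_5\}$ and partition $N(C)$ into $A_i,B_i,D_i,X_i,Y_i,Z$ exactly as in Section~3. Since $G$ is co-gem-free, every vertex outside $C$ has at least two neighbours in $C$, so $V(G)=C\cup A\cup B\cup D\cup X\cup Y\cup Z$. It therefore suffices to bound each of these sets by a function of $k$. Primality is used throughout as the substitute for vertex-criticality. It rules out homogeneous sets, hence twins, and by \cref{lem:str-prime-dart} it also gives that $G$ is $F_2$-free.

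\textbf{Step 1: re-derive the adjacency rules under dart-freeness.} The first task is to redo the analogues of \ref{parg-A}--\ref{parg-Z}, replacing paraglider-obstructions by dart-obstructions. Co-gem-freeness alone already yields several of them, since those proofs used only co-gems. In particular:
\begin{itemize}
\item $G[A_i]$ is $P_4$-free.
\item $A_i$ is complete to $(A\setminus A_i)\cup D_i\cup D_{i+1}$.
\item No vertex of $B_{i\pm2}$ is mixed on a stable part of $B_i$.
\end{itemize}
For the sets with many neighbours on $C$ (namely $D,X,Y,Z$), I would use darts centred at a vertex of $C$ to show two things. First, vertices of $Y_i\cup Z$ are pairwise adjacent or have controlled nonadjacency. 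Second, $Z$ together with suitable parts of $Y$ forms a clique, and so has size at most $k$. Where two vertices of $X\cup Y$ cannot be shown adjacent, I expect them to become twins relative to $C\cup N(C)$, which contradicts primality.

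\textbf{Step 2: bound the sets via \cref{lem:dart-free}.} The natural template uses vertices of $C$ as the vertex $y$ in condition (II). For example, $v_i$ is complete to $D_i\cup\{v_{i-1},v_{i+1}\}$, and it is also complete to $B_{i+1}$ and to $A_i$. The plan is to take $T$ to be an anticomponent of a set such as $D_i$, $B_i$ or $A_i$, and to take $S$ to be a bounded set of already-controlled vertices. This $S$ could be $\{v_{i-1},v_{i+1}\}$ together with a few vertices of the bounded sets. Condition (I) then has to be checked: no vertex of $S$ may miss both ends of a non-edge of $T$. For $D_i$ this should follow from co-gem obstructions of the form $\{d,d^*,\cdot,v_{i+2},v_{i+3}\}$. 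The lemma then bounds each anticomponent by a function of $\chi\le k$.

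The number of anticomponents is controlled separately. Distinct anticomponents are complete to each other, so there are at most $k$ of them. Any anticomponent that is a single vertex lies in a clique and is likewise bounded by $k$. For $A_i$ I would reuse the good/nice-partition machinery of \cref{lem1-p4parg}, since it only needs $P_4$-freeness of $A_i$. The components $W_j$ that are singletons are distinguished by their neighbourhoods in the already-bounded part. Primality forbids two such singletons with equal neighbourhoods, because they would form a module; this gives a bound of the form $2^{(\text{bounded})}$, as in \cref{A1-2p2-bndl}. Larger components $W_j$ are handled by \cref{lem:dart-free} or shown to be cliques.

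\textbf{Main obstacle.} The hardest part is the $A_i$ and $B_i$ parts. The dart is less restrictive than the paraglider near two-neighbour vertices, so many clean statements of Section~3 will fail; for instance, $B_i$ need not be stable. I would first try to show that a non-edge inside $B_i$ (or a $2P_2$ inside $A_i$), combined with the surrounding structure, produces an induced $F_2$, which is impossible by \cref{lem:str-prime-dart}. Failing that, I would obtain the required co-connected set $T$ with a universal vertex $y$ from $C$, so that \cref{lem:dart-free} applies. Assembling the bounds for $Z$, $Y$, $X$, $D$, $B$ and $A$ in that order then bounds $|V(G)|$ in terms of $k$.
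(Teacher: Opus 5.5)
There is a genuine gap. What you have is a plan, not a proof. None of the sets is actually bounded, and your ``main obstacle'' ($A_i$ and $B_i$) is left at ``I would first try \dots\ Failing that \dots''. The two concrete tools you do commit to both fail as stated:
\begin{itemize}
\item \cref{lem1-p4parg} does not ``only need $P_4$-freeness''. It is stated for ($P_4$, paraglider)-free graphs, and paraglider-freeness is exactly what makes $W_0$ a clique in its first claim. So it cannot be applied to $G[A_i]$ using co-gem-freeness alone.
\item $Z$ need not be a clique here. For nonadjacent $z,z'\in Z$, the set $\{v_1,z,v_2,z',v_5\}$ induces a wheel, which is not a dart. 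For instance, take $C_5$, add two nonadjacent vertices $z,z'$ complete to $C$, and add a vertex $s$ with $N(s)=\{v_5,v_1,v_2,z\}$. This graph is prime and (co-gem, dart)-free, so $|Z|\le k$ is not available.
\end{itemize}

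The missing ingredients are a few short dart arguments, and they show the dart is far more restrictive near $C$ than you expect. In your (Section 3) notation:
\begin{itemize}
\item[(1)] Suppose $x,y\notin C$ are nonadjacent, both adjacent to $v_i,v_{i+1}$, and both nonadjacent to $v_{i+2}$. Then $\{x,v_i,y,v_{i+1},v_{i+2}\}$ is a dart centred at $v_{i+1}$. Every vertex of $A\cup D\cup X\cup Y$ lies in one of these five classes, so $|A\cup D\cup X\cup Y|\le 5k$. In particular each $A_i$ is a clique, and no partition machinery is needed.
\item[(2)] Let $V_i:=\{v_i\}\cup B_i\cup D_i$. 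It is $P_3$-free, since a $P_3$ together with $v_{i+1},v_{i+2}$ is a dart. It is $(P_2+P_1)$-free, since a $P_2+P_1$ together with $v_{i-1},v_{i+1},v_{i+2}$ is an $F_2$, which \cref{lem:str-prime-dart} excludes. Hence $V_i$ is a clique or a stable set, and a stable $V_i$ is complete to $V_{i\pm1}$ and anticomplete to $V_{i\pm2}$.
\item[(3)] If $Z\neq\emptyset$, further darts force $X=\emptyset$, force every $V_i$ to be a clique, and make $A$ anticomplete to $Z$.
\end{itemize}

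The proof then splits into two cases.
\begin{itemize}
\item If $Z=\emptyset$, two vertices of a stable $V_i$ with the same neighbours in $A\cup X\cup Y$ would form a module. Hence $|V_i|\le 2^{5k}$.
\item If $Z\neq\emptyset$, then $|V(G)\setminus Z|\le 10k$. Bound each anticomponent $U$ of $G[Z]$ by \cref{lem:dart-free}, taking $y=v_{i+1}$ and $S=\{s\}$. Here $s\in V_i\cup Y_i$, for some $i$, is a vertex mixed on $U$; it exists by primality, because $A$ is anticomplete to $Z$ and the other anticomponents are complete to $U$. Condition (I) is verified by one more dart.
\end{itemize}

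Your Step 2 template resembles this last step. But without (1)--(3) you have neither the bounded surroundings nor a choice of $S$ that provably satisfies (I).
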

\begin{proof}
     Let $G$ be a  prime  $($co-gem, dart$)$-free graph such that $\chi(G)\leq k$. Suppose that $G$ contains a $C_5$, say with vertex-set $C:=\{v_1,v_2,\ldots, v_5\}$ and with edge-set  $\{v_1v_2,v_2v_3,v_3v_4,v_4v_5,v_5v_1\}$.    For $i\in [5]$ and $i$ mod $5$, we define the following the sets:
\begin{center}
\begin{tabular}{ccl}
$V_i$  &:= &  $\{u \in V(G) \mid N(u) \cap C = \{v_{i-1}, v_{i+1}\}\text{ or } \{v_{i-1},v_i,v_{i+1}\}\}$,\\
$A_i$ &:= & $\{u \in V(G)\setminus C \mid N(u) \cap C = \{v_i, v_{i+1}\}\}$,\\
$B_i$  &:= & $\{u \in V(G)\setminus C\mid N(u) \cap C = \{v_i, v_{i+1}, v_{i-2}\}\}$,\\
$D_i$  &:= & $\{u \in V(G)\setminus C\mid N(u) \cap C = C\setminus \{v_{i+2}\}\}$, and\\
$T$ &:=& $\{u \in V(G)\setminus C \mid N(u) \cap C = C\}$.
\end{tabular}
\end{center}
Note that $v_i\in V_i$ for  each $i\in [5]$. Let $A:= \cup_{i=1}^{5} A_i$, $B:=\cup_{i=1}^{5} B_i$, and $D:= \cup_{i=1}^{5} D_i$. Then since $G$ does not contain a co-gem, it is not difficult to see that  $V(G)$ = ($\cup_{i=1}^{5} V_i) \cup A\cup B\cup D \cup T$. We  first prove some useful structural properties, where $i\in [5]$ and $i$ mod $5$.
\begin{claim}\label{dart-c5-Vi} For each $i\in [5]$, $V_i$ is   a stable set or a clique.
\end{claim}
\noindent{\it Proof of} \cref{dart-c5-Vi}:~It is enough to show that $G[V_i]$ is ($P_3$, $P_2+P_1$)-free. If $G[V_i]$ contains a $P_3$, say with vertex-set $W$, then $W\cup \{v_{i+1},v_{i+2}\}$ induces a dart; so $G[V_i]$ is $P_3$-free. Next  if $G[V_i]$ contains a  $P_2+P_1$, say with vertex-set $W'$, then $W'\cup \{v_{i-1}, v_{i+1}, v_{i+2}\}$ induces an $F_2$ which is a contradiction to Lemma~\ref{lem:str-prime-dart}; so   $G[V_i]$ is ($P_2+P_1$)-free. This proves \ref{dart-c5-Vi}. $\sq$

\begin{claim}\label{dart-c5-Vistable} If there is an $i\in [5]$ such that $V_i$ is a stable set,   then $V_i$ is complete to $V_{i-1}\cup V_{i+1}$ and is anticomplete to $V_{i-2}\cup V_{i+2}$.
\end{claim}
 \noindent{\it Proof of} \cref{dart-c5-Vistable}:~Suppose that $V_1$ is a stable set.  By symmetry, it is sufficient to prove that $V_1$ is  complete to $V_2$, and is anticomplete to $V_3$.    Let $u\in V_1$ be arbitrary.   Now if there is a vertex in $V_2$, say $x$, which is nonadjacent to $u$, then $u\neq v_1$, and so $\{u,v_2,v_1,x,v_4\}$ induces a co-gem or $\{v_1,x,v_3,v_2,u\}$ induces a dart; so $u$ is complete to $V_2$. Also  if  there is a vertex in $V_3$, say $y$, which is adjacent to $u$, then $u\neq v_1$, and thus  $\{u,y,v_4,v_3,v_1\}$ induces a co-gem or $\{u,y,v_3,v_2,v_1\}$ induces a dart; so $u$ is anticomplete to $V_3$. So $V_1$ is  complete to $V_2$, and is anticomplete to $V_3$, since $u$ is arbitrary.  $\sq$

\begin{claim}\label{dart-c5-ABD}  For each $i\in [5]$, $A_i\cup B_i\cup D_i$ is a clique, and so $|A_i\cup B_i\cup D_i|\leq k$.
\end{claim}
  \noindent{\it Proof of} \cref{dart-c5-ABD}:~If there are nonadjacent vertices in $A_i\cup B_i\cup D_i$, say $x$ and $y$,   then $\{x,v_{i},y,v_{i+1},$ $ v_{i+2}\}$ induces a dart; so \ref{dart-c5-ABD} holds. $\sq$

\begin{claim}
   \label{dart-c5-U} For each $i\in [5]$, $A_i$ is anticomplete to $T$.
    \end{claim}

\noindent{\it Proof of} \cref{dart-c5-U}:~If there are adjacent vertices, say $a\in A_i$ and $t\in T$, then $\{a,v_{i+1},v_{i+2},t,$ $ v_{i-1}\}$ induces a dart; so \ref{dart-c5-U} holds. $\sq$

\begin{claim}\label{dart-c5-UVB}  If $T\neq \emptyset$, then $B$ is an empty set, and  $V_i$ is a clique, for all $i\in [5]$.
\end{claim}
  \noindent{\it Proof of} \cref{dart-c5-UVB}:~Let $t\in T$. Suppose that there is a vertex, say $x\in V_i \cup B_{i+1}$, where $i\in [5]$.  Then since $\{v_{i},t,v_{i-2},v_{i-1},x\}$ or $\{v_i, v_{i+1},x,t, v_{i-2}\}$ does not induce a dart, we have $x\notin B_{i+1}$ and $xv_i\in E(G)$. So we conclude that $B_{i+1}=\emptyset$, and from \cref{dart-c5-Vi}, $V_i$ is a clique. So \cref{dart-c5-UVB} holds, since $i$ is arbitrary.  $\sq$

\begin{claim}\label{dart-c5-compare} If there are two nonadjacent vertices in $T$, say $x$ and $y$, then any vertex in $V_i\cup D_{i-2}$ is adjacent to at least one of $x$ and $y$.
\end{claim}
   \noindent{\it Proof of} \cref{dart-c5-compare}:~If there is a vertex in $V_i\cup D_{i-2}$, say $z$, which is anticomplete to $\{x,y\}$, then $z\neq v_i$, and so   $\{x,v_{i+1},y,v_{i+2},z\}$ or $\{x,v_i,y,v_{i+1},z\}$ induces a dart; so \cref{dart-c5-compare} holds.  $\sq$

\medskip
From \ref{dart-c5-ABD},  we have $|A\cup B\cup D| \le  5k$. Now we will show that $|V(G)|$ is bounded  above by a function of $k$ in two cases based on the set $T$.

\medskip
    \noindent
    {\textbf{Case 1}}.~\emph{$T$ is empty.}

\smallskip
    Then $V(G)$ = $(\bigcup_{i=1}^{5} V_i) ~\cup A \cup B \cup D$. It is sufficient to prove that $|V_i|$ is bounded above by a function of $k$, for each $i\in [5]$. Let $i\in [5]$ be arbitrary. From \ref{dart-c5-Vi}, $V_i$ is  a clique or a stable set. Obviously, if $V_i$ is a clique, then $|V_i|\le k$ and we are done; so we may assume that $V_i$ is a stable set.    From \ref{dart-c5-Vistable}, no vertex in  $(\bigcup_{j=1}^{5} V_j)\setminus V_i$ is mixed on two vertices of $V_i$. Thus if $|V_i|>  2^{5k}$, then since $|A\cup B\cup D| \le  5k$,  by the pigeonhole principle, there are two vertices, say $p$ and $q$, in $V_i$ such that $p$ and $q$ have the same set of neighbours in $A\cup B\cup D$. This implies that  $\{p,q\}$ is a homogeneous set in $G$ (by \ref{dart-c5-Vistable}) which is a contradiction to our assumption  that $G$ is prime; so
     $|V_i|\leq  2^{5k}$, and we are done.

    \medskip
    \noindent
 {\textbf{Case 2}}.~\emph{$T$ is nonempty.}

\smallskip
    Then $V(G)$ = $(\bigcup_{i=1}^{5} V_i) ~\cup A \cup D \cup T$, by \ref{dart-c5-UVB}. Also from \ref{dart-c5-ABD}  and \ref{dart-c5-UVB},   $A_i\cup D_i$ and $V_i$ are cliques for each $i\in [5]$; so we have $|V(G)\setminus T|\leq 5k+5k = 10k$. Thus, it is sufficient to prove that $|T|$ is bounded above by a function of $k$. Note that, since $G[T]$ is $(k-3)$-colorable, there are at most ($k-3$) anti-components in $G[T]$. So we will show that any arbitrary anti-component, say induced by $U$,  of $G[T]$  has bounded number of vertices. Recall that from \ref{dart-c5-U}, $A$ is anticomplete to $T$. If for all $i\in [5]$, $i$ mod $5$, no vertex in $V_i \cup D_{i-2}$ is mixed on $U$, then $U$ is a homogeneous set in $G$ when $|U|\ge 2$, and thus $|U|$ = 1 and we are done. So we   may assume that there is an $i\in [5]$, $i$ mod $5$ and there is a vertex, say $s$, in $V_i\cup D_{i-2}$,   which is mixed on $U$. Now we let $S:=\{s\}$ and   $y:= v_{i+1}$. Then since $s$ is mixed on $U$ and $G[U]$ is co-connected,  from \ref{dart-c5-compare} and from Lemma \ref{lem:dart-free}, it follows that $|U|$ is bounded above by a function of $k$.

    This completes the proof of \cref{lem:contain_C5}.
\end{proof}

We will also use the following theorem.

\begin{rlt}[\cite{BH2026}]\label{thm:clq-sklton}
    Let $\mathcal{G}$ be a hereditary class of graphs, and let $k\in \mathbb{N}$. For each $\ell \in [k]$, suppose that the number of vertices in each $\ell$-vertex-critical blowup of every prime graph in $\cal G$ is bounded above by a function of $\ell$. Then for each $p\in [k]$, the number of vertices in each $p$-vertex-critical graph in $\cal G$ is bounded above by a function of $p$.
\end{rlt}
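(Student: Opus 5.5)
The plan is to prove the statement by strong induction on $p$, using the modular decomposition of a $p$-vertex-critical graph $G\in\mathcal{G}$. For $p\le 2$ the only $p$-vertex-critical graphs are $K_1$ and $K_2$, so we may assume $p\ge 3$ and that the statement holds for all smaller values.

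\emph{Step 1: reduce to the connected, co-connected case.} A vertex-critical graph is connected, since otherwise one component already has chromatic number $p$. If $G$ is not co-connected, write $G$ as the join of the subgraphs $G_1,\dots,G_s$ induced by its anti-components. Then $\chi(G)=\sum_j\chi(G_j)$. Moreover, each $G_j$ is $\chi(G_j)$-vertex-critical: deleting a vertex of $G_j$ without lowering $\chi(G_j)$ would keep $\chi(G)$ unchanged. Each $G_j$ lies in $\mathcal{G}$ because the class is hereditary, and $\chi(G_j)<p$. Hence $|V(G_j)|$ is bounded by induction, and $s\le p$ since each $\chi(G_j)\ge 1$. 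So we may assume that $G$ is connected and co-connected.

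\emph{Step 2: quotient by maximal modules.} In the connected, co-connected case, the maximal proper modules $M_1,\dots,M_r$ partition $V(G)$. Picking one vertex from each gives an induced subgraph $H$ that is prime, and $H\in\mathcal{G}$. Let $c_j:=\chi(G[M_j])$.

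\begin{itemize}
\item \emph{Bounding each module.} Since $G[M_j]$ is a proper induced subgraph of the critical graph $G$, we have $c_j<p$. Also $G[M_j]$ is $c_j$-vertex-critical. Indeed, every vertex of $N(M_j)$ is complete to $M_j$, so in any coloring of $G$ the colors on $M_j$ avoid those on $N(M_j)$. If some $v\in M_j$ had $\chi(G[M_j]-v)=c_j$, then a $(p-1)$-coloring of $G-v$ would use at least $c_j$ colors on $M_j-v$ disjoint from the colors on $N(M_j)$. Recoloring $M_j$ with a $c_j$-coloring from those colors would then $(p-1)$-color $G$, a contradiction. By induction, $|M_j|$ is bounded by a function of $c_j<p$.
\item \emph{Replacing modules by cliques.} Let $G'$ be obtained by substituting $K_{c_j}$ for each $M_j$. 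Then $G'$ is a blowup of $H$, and it suffices to show that $G'$ is $p$-vertex-critical; the hypothesis then bounds $|V(G')|\ge r$ by a function of $p$.
\end{itemize}

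\emph{Step 3: $G'$ is $p$-vertex-critical.} This is the step I expect to need the most care; I would carry it out one module at a time. Fix a module $M$ with $\chi(G[M])=c$, and let $G^\ast$ be $G$ with $M$ replaced by $K_c$.
\begin{itemize}
\item Colorings of $G$ and $G^\ast$ convert into each other with the same number of colors. A coloring of $G$ uses at least $c$ colors on $M$, all absent from $N(M)$, and any $c$ of them can be placed on the clique. Conversely, the $c$ colors on the clique can be reused for a $c$-coloring of $G[M]$. Hence $\chi(G^\ast)=p$.
\item Deleting a vertex $u\notin M$: a $(p-1)$-coloring of $G-u$ uses at least $c$ colors on $M$, all absent from $N(M)$, so it yields a $(p-1)$-coloring of $G^\ast-u$.
\item Deleting a clique vertex: take any $v\in M$ and a $(p-1)$-coloring of $G-v$. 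Since $\chi(G[M]-v)\ge c-1$, the graph $M-v$ carries at least $c-1$ colors, none used on $N(M)$. These colors can be placed on $K_{c-1}$, so $G^\ast$ minus a clique vertex is $(p-1)$-colorable.
\end{itemize}
Thus $G^\ast$ is $p$-vertex-critical. The remaining modules $M_{j'}$ are still modules of $G^\ast$ with the same induced subgraphs, so the replacement can be iterated over $j=1,\dots,r$ to reach $G'$.

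\emph{Conclusion.} Combining the steps, $|V(G)|=\sum_j|M_j|\le r\cdot\max_{c<p} g(c)$, where $g$ denotes the bound obtained by induction and $r$ is bounded by the hypothesis applied to $G'$. This gives a bound on $|V(G)|$ depending only on $p$. The main point to verify carefully is the criticality transfer in Step 3, in particular the use of $\chi(G[M]-v)\ge c-1$ when deleting a clique vertex.
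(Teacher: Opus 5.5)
Your proof is correct. Note that the paper does not prove this statement at all: it quotes it from \cite{BH2026} and only applies it (in the dart and house sections). So there is no in-paper argument to compare against.

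Your route is the natural one for this reduction:
\begin{itemize}
\item induct on $p$;
\item split a non-co-connected critical graph into critical anti-components of smaller chromatic number;
\item otherwise take Gallai's partition into maximal proper modules, show each module $M$ induces a $\chi(G[M])$-vertex-critical graph, and show that substituting $K_{\chi(G[M])}$ for $M$ preserves $p$-vertex-criticality.
\end{itemize}
This yields a $p$-vertex-critical blowup of the prime quotient $H\in\mathcal{G}$, whose order bounds $r$.

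I checked the delicate points and they hold:
\begin{itemize}
\item The criticality transfer for a deleted clique vertex needs only $\chi(G[M]-v)\ge c-1$ and that $N(M)$ is complete to $M$. Step 3 never uses criticality of $G[M]$, only $\chi(G[M])=c$, which is why iterating over the modules is legitimate.
\item The argument that $G[M_j]$ is $c_j$-critical is sound. The assumption $\chi(G[M_j]-v)=c_j\ge 1$ forces $M_j-v\neq\emptyset$.
\end{itemize}

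Two classical facts are used silently; it would be cleaner to state them:
\begin{itemize}
\item In a connected, co-connected graph the maximal proper modules partition $V(G)$ and the quotient is prime.
\item Two disjoint modules are either complete or anticomplete to each other. This is exactly what makes $G'$ a blowup of $H$, and what keeps each remaining $M_{j'}$ a module with unchanged induced subgraph after each substitution.
\end{itemize}
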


We are now ready to prove the main theorem of this section, and is given below.

 \begin{theorem}\label{thm:critical-dart-free}
       For each $k\in \mathbb{N}$, there are finitely many $k$-vertex-critical $($co-gem, dart$)$-free graphs.
\end{theorem}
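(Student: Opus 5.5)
We will apply \cref{thm:clq-sklton} to the hereditary class $\mathcal{G}$ of (co-gem, dart)-free graphs. It then suffices to show the following: for each $\ell\in[k]$, every $\ell$-vertex-critical blowup $G'$ of a prime graph $H\in\mathcal{G}$ has a number of vertices bounded by a function of $\ell$. Note first that a blowup of $H$ is obtained by substituting cliques for the vertices of $H$, so it lies in $\mathcal{G}$ only if it is itself (co-gem, dart)-free. In any case $H$ is an induced subgraph of $G'$, so $H$ is (co-gem, dart)-free, and $\chi(H)\le \chi(G')=\ell$. Moreover, since $G'$ is $\ell$-vertex-critical, it has no comparable vertices. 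Two vertices substituted for the same vertex of $H$ are adjacent, so this gives no direct restriction. However, the clique substituted for a vertex $v$ has size at most $\omega(G')\le \ell$. Hence $|V(G')|\le \ell\,|V(H)|$, and it suffices to bound $|V(H)|$ by a function of $\ell$.

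So let $H$ be a prime (co-gem, dart)-free graph with $\chi(H)\le \ell$. We split into cases according to perfection.

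\textbf{$H$ contains a $C_5$.} Then \cref{lem:contain_C5} bounds $|V(H)|$ directly.

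\textbf{$H$ is perfect.} Then a prime perfect (co-gem, dart)-free graph need not be small in general, so we must use criticality of $G'$ rather than of $H$ alone. The cleaner route is: if $H$ is perfect, then every blowup of $H$ is perfect, since substitution preserves perfection (Lov\'asz). So $G'$ is a perfect $\ell$-vertex-critical graph, which forces $G'=K_\ell$, and we are done.

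\textbf{$H$ is imperfect and $C_5$-free.} By \cref{spgt}, $H$ contains an odd hole $C_{2p+1}$ with $p\ge 3$ or an odd antihole. An odd hole of length at least $7$ contains a co-gem, so this is impossible. Hence $H$ contains an odd antihole $\overline{C_{2p+1}}$ with $p\ge 3$, and $\overline{C_7}$ is not excluded by dart/co-gem directly. Here we pass to the complement $\overline{H}$. It is prime, connected (since $H$ is co-connected), (gem, co-dart)-free, and $C_5$-free (as $\overline{C_5}=C_5$). It also contains the odd hole $C_{2p+1}$. By \cref{lem:antihole}, $\overline{H}$ is triangle-free, i.e. $H$ has no stable set of size $3$. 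Then $|V(H)|\le R(3,\ell+1)$, because $\omega(H)\le \ell$, which is a function of $\ell$.

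Combining the three cases bounds $|V(H)|$, and hence $|V(G')|\le \ell|V(H)|$, for every $\ell\le k$. \cref{thm:clq-sklton} then yields that every $k$-vertex-critical (co-gem, dart)-free graph has boundedly many vertices, so there are finitely many.

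The main subtlety I expect is the perfect case. There I rely on substitution preserving perfection instead of bounding $H$ itself; the other cases follow routinely from the earlier lemmas.
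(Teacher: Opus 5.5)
Your proposal is correct and follows essentially the same route as the paper's proof: reduce via \cref{thm:clq-sklton} to critical blowups of prime graphs, then handle the perfect case (the blowup is $K_\ell$), the $C_5$ case via \cref{lem:contain_C5}, and the long odd antihole case via \cref{lem:antihole} applied to the complement followed by a Ramsey bound. The only difference is cosmetic: in the antihole case you bound $|V(H)|$ by $R(3,\ell+1)$ and then multiply by $\ell$, whereas the paper applies Ramsey directly to the blowup, which is itself $3K_1$-free.
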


\begin{proof}
   Let $\mathcal{G}$ be the class of $($co-gem, dart$)$-free graphs, and let $k\in \mathbb{N}$.   We prove that   the number of vertices in a $k$-vertex-critical  graph in $\cal G$ is bounded above by a function of $k$. From Theorem \ref{thm:clq-sklton}, it is sufficient to prove that the number of vertices in each $\ell$-vertex-critical blowup of every prime graph in $\mathcal{G}$ is bounded above by a function of $\ell$, for each $\ell\in [k]$.

    Let $G\in \cal G$ be a prime graph, and let $\ell\in [k]$.  Consider an arbitrary $\ell$-vertex-critical blowup of $G$, say $H$. If $G$ is perfect, then $H$ is the complete graph $K_{\ell}$, and hence $H$ has $\ell$ vertices.   So we may assume that $G$ is not perfect. Then since $G$ is co-gem-free, by \cref{spgt}, $G$  must contain a   co-$C_{2p+1}$, for some $p\ge 2$. First suppose that $G$ contains a $C_5$.
    Since $G$ is an induced subgraph of  $H$,   $G$ must be $\ell$-colorable. So from Lemma \ref{lem:contain_C5}, $|V(G)|$ is bounded above by a function of $\ell$.
     Then since  $H$ can be obtained from $G$ by substituting a clique of size at most $\ell$  for each vertex of $G$,   $|V(H)|$ is bounded above  by a function of $\ell$, and we are done.   So we may assume that $G$ contains a co-$C_{2p+1}$, where $p\ge 3$. Since $G$ is prime, $G$ is co-connected.  Then from   \cref{lem:antihole}, it follows that $G$ is 3$K_1$-free. Since any blowup of a 3$K_1$-free graph is also 3$K_1$-free, we see that $|V(H)|$ is bounded above by the Ramsey number $R(3,\ell+1)$, and we  are done.
\end{proof}

From \cref{thm:critical-dart-free} and from \cref{kcol-ptime}, we have the following.

    \begin{corollary}
        For each $k\in \mathbb{N}$, there is a polynomial-time certifying algorithm  for $k$-{\sc Coloring} of (co-gem, dart)-free graphs.
    \end{corollary}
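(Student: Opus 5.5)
The plan is to combine \cref{thm:critical-dart-free} with \cref{kcol-ptime}, in the same way as the corollary for (co-gem, paraglider)-free graphs. Fix $k\in \mathbb{N}$ and let $\mathcal{G}$ be the class of (co-gem, dart)-free graphs. The class $\mathcal{G}$ is hereditary. Applying \cref{thm:critical-dart-free} with $k+1$ in place of $k$, there are only finitely many $(k+1)$-vertex-critical graphs in $\mathcal{G}$. Let $\mathcal{L}_k$ denote this finite list and $N_k$ the maximum number of vertices of a member of $\mathcal{L}_k$; both depend only on $k$, so they can be treated as constants hard-wired into the algorithm.

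Given an input $G\in\mathcal{G}$ on $n$ vertices, the algorithm proceeds in two phases.
\begin{enumerate}
\item For every subset $S\subseteq V(G)$ with $|S|\le N_k$, it tests whether $G[S]$ is isomorphic to a member of $\mathcal{L}_k$. This takes $O(n^{N_k})$ time times a constant, which is polynomial for fixed $k$. If some $G[S]$ matches, the algorithm answers `no' and outputs $G[S]$. This certificate can be verified in constant time, since one only checks that $G[S]$ is $(k+1)$-vertex-critical on at most $N_k$ vertices.
\item If no such $S$ exists, then $G$ is $k$-colorable. Indeed, if $\chi(G)>k$, deleting vertices one at a time while the chromatic number stays above $k$ yields a $(k+1)$-vertex-critical induced subgraph. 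By heredity this subgraph lies in $\mathcal{G}$, hence belongs to $\mathcal{L}_k$, and it would have been found in the first phase. This argument is exactly \cref{thm:finite-imp-P}.
\end{enumerate}

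It remains to produce the certificate in the `yes' case, namely an explicit $k$-coloring. This is where \cref{kcol-ptime} enters, and it is the only step I expect to need care. The algorithm of Couturier et al.\ for $k$-{\sc Coloring} of co-gem-free graphs is, as I understand it, constructive: it returns a $k$-coloring whenever one exists. If so, we simply run it and output the coloring, which is trivially verifiable edge by edge.

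Should one want to avoid relying on constructiveness, the fallback I would try first is self-reducibility that stays inside the hereditary class. Process the vertices in order, maintaining a partial coloring, and use a list-coloring or precoloring-extension version of the co-gem-free algorithm as an oracle to decide whether each tentative color choice still extends to a full $k$-coloring. I would deliberately avoid merging nonadjacent vertices, because identification can leave $\mathcal{G}$. Since the paper's abstract attributes exactly this combination to \cite{CGKP2015}, I expect the constructive version to suffice.

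In both cases the total running time is polynomial in $n$ for each fixed $k$, and each output comes with a polynomially checkable certificate. This gives the claimed certifying algorithm.
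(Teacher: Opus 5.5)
Your proposal is correct and matches the paper, which derives the corollary in one line from \cref{thm:critical-dart-free} (applied with $k+1$) together with \cref{kcol-ptime}. Your two phases are exactly what that combination amounts to: a brute-force search for one of the finitely many $(k+1)$-vertex-critical (co-gem, dart)-free graphs serves as the `no' certificate, and the constructive coloring algorithm for co-gem-free graphs supplies the `yes' certificate. Your list-coloring self-reduction fallback is also sound, but it is not needed.
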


    \section{Vertex-critical (co-gem, house)-free graphs}\label{sec:house-free}
In this section, we show that for each $k\in \mathbb{N}$, there are finitely many $k$-vertex-critical (co-gem, house)-free graphs by using a structure theorem  which  directly follows from a sequence of lemmas proved by  Brandst\"adt and Kratsch in \cite{Brandstadt2005}, and is given below.

\begin{rlt}[\cite{Brandstadt2005}]\label{thm:str-housecogem}
   Let $G$ be a prime (co-gem, house)-free graph. Then   $G$ is perfect, or
            $G$ has at most nine vertices, or $G$ is $3K_1$-free, or $G$ contains a clique cutset.
        \end{rlt}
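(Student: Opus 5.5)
The plan is to reduce to graphs containing a $C_5$, and then to analyse the neighbourhood of that $C_5$ in the same way as for the paraglider case, arguing throughout on the complement where convenient. Let $G$ be a prime (co-gem, house)-free graph that is not perfect, has no clique cutset and has at least ten vertices; we must show that $G$ is $3K_1$-free.

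\textbf{Step 1: reduction to a $C_5$.} By \cref{spgt}, $G$ contains an odd hole or an odd antihole. An odd hole $C_{2p+1}$ with $p\ge 3$ contains a co-gem. The complement of the house is $P_5$, and $C_{2p+1}$ with $p\ge 3$ contains a $P_5$, so co-$C_{2p+1}$ contains a house. Hence $G$ contains a $C_5$ with vertex-set $C=\{v_1,\dots,v_5\}$. Since $G$ is co-gem-free, every vertex outside $C$ has at least two neighbours in $C$, so $V(G)=C\cup N(C)$. Partition $N(C)$ into the classes $A_i,B_i,D_i,X_i,Y_i,Z$ exactly as in the paraglider section.

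\textbf{Step 2: restrict the classes using house-freeness.} Check each adjacency type against the house, the way \ref{parg-A}--\ref{parg-Z} were obtained.
\begin{itemize}
\item For $u\in B_i$, the set $\{u,v_{i-1},v_{i-2},v_{i+2},v_{i+1}\}$ induces a house, so $B=\emptyset$.
\item Similarly $X_i$ yields a house (e.g. $\{v_{i+2},v_{i-2},u,v_i,v_{i+1}\}$ after checking edges), so $X=\emptyset$.
\end{itemize}
Only the sets $A_i$, $D_i$, $Y_i$ and $Z$ then remain.

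\textbf{Step 3: relations between the remaining classes.} Using co-gem- and house-freeness, establish, in the style of \ref{parg-A}:
\begin{itemize}
\item $A_i$ is complete to $A_{i\pm1}$ and anticomplete to $A_{i\pm 2}$, and similarly for the $D_i$;
\item the sets $Y_i$ and $Z$ are cliques or have controlled complements;
\item the vertices of $D_i\cup A_i$ sit as ``blowup-like'' modules around the $C_5$.
\end{itemize}
Primeness then forces each such class to be tiny. Two vertices of a class with identical behaviour towards the other classes form a homogeneous set. Whenever a class splits further, a clique cutset appears, for instance the set $\{v_i,v_{i+1}\}\cup(\text{its common neighbours})$ separating an $A_i$-component from $v_{i+3}$. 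This should leave either at most four vertices outside $C$ (so $|V(G)|\le 9$), or only vertices in $Y\cup Z$ together with a bounded set.

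\textbf{Step 4: the $3K_1$-free conclusion.} In the remaining case, show that a stable set $\{a,b,c\}$ is impossible. At most two of them lie in $C$. A vertex of $Y\cup Z$ sees at least four vertices of $C$, and co-gem-freeness applied to a stable triple together with a suitable $P_4$ of $C$ (or a $P_2$ from $C$) gives a contradiction. Equivalently, in the complement, $\overline{G}$ is (gem, $P_5$)-free and contains a $C_5$, and we need $\overline G$ to be triangle-free. This is the analogue of \cref{lem:antihole}, and I would reuse that lemma's strategy: first show no vertex is adjacent, in $\overline G$, to two consecutive vertices of the $C_5$, then propagate triangle-freeness outward through the distance levels $N_i$.

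\textbf{Main obstacle.} I expect Step 3 to be the hardest part, specifically showing that the mixture of $A_i$, $D_i$ and $Y_i$ either collapses via homogeneous sets or clique cutsets to at most nine vertices, or forces the $3K_1$-free situation. The case analysis on how $A_i$ and $D_{i+1}$ interact with $Y$ is where I would first try to exhibit an explicit homogeneous set (two vertices with the same neighbourhood in $C\cup Y\cup Z$) to invoke primeness.
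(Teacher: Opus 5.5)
Your Step 1 and the exclusion of $X$ are correct, but the rest of Step 2 is not. For $u\in B_i$, the set $\{u,v_{i-1},v_{i-2},v_{i+2},v_{i+1}\}$ has only the five edges $uv_{i-1},v_{i-1}v_{i-2},v_{i-2}v_{i+2},v_{i+2}v_{i+1},v_{i+1}u$, so it induces a $C_5$, not a house. In fact $C_5$ together with a false twin of $v_i$ is (co-gem, house)-free, so $B_i\neq\emptyset$ genuinely occurs. Conversely, you missed that $Y=\emptyset$: for $u\in Y_i$ the set $\{u,v_{i-1},v_i,v_{i+1},v_{i+2}\}$ is a house (square $uv_{i+1}v_iv_{i-1}$, roof $v_{i+2}$). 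So the surviving classes are $A,B,D,Z$, not $A,D,Y,Z$.

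The essential gap, however, is that Steps 3 and 4 contain no argument for what the theorem actually asserts. Every $u\in A_i$ lies in the stable set $\{u,v_{i+2},v_{i+4}\}$, and every $u\in B_i$ lies in $\{u,v_i,v_{i+2}\}$. So the $3K_1$-free outcome requires proving $A\cup B=\emptyset$. It also requires excluding stable triples inside $C\cup D\cup Z$, such as $\{d,d',v_3\}$ with $d\in D_1$, $d'\in D_5$ nonadjacent, or three pairwise nonadjacent vertices of $Z$.

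None of these is excluded by the forbidden subgraphs alone. The following are all (co-gem, house)-free:
\begin{itemize}
\item $C_5$ plus a vertex adjacent exactly to $v_1,v_2$;
\item $C_5$ plus a false twin of a cycle vertex;
\item $C_5$ plus such a pair $d,d'$;
\item $C_5$ joined to three pairwise nonadjacent vertices.
\end{itemize}
The last example refutes the local co-gem argument of your Step 4 directly, since every vertex there has only two non-neighbours. Hence these configurations can only be ruled out by using primeness, the absence of clique cutsets and $|V(G)|\ge 10$ together, and that is the entire content of the statement.

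Step 3 merely lists unproved adjacencies. For $A_i$ versus $A_{i\pm2}$, the truth is that co-gem-freeness makes them complete, and house-freeness then forces one of them to be empty. Step 3 then says the classes ``should'' collapse, but never derives the bound nine or explains why the leftover bounded set creates no $3K_1$.

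Your plan to imitate \cref{lem:antihole} breaks at its first step. Recall $X=Y=\emptyset$; then a vertex outside $C$ that is adjacent in $\overline{G}$ to two consecutive vertices of the $5$-cycle $\overline{G}[C]$ is exactly a vertex of $A\cup B$. The corresponding claim in \cref{lem:antihole} relies on the hole having length at least seven, and it is false for a $C_5$.

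The paper itself does not prove this statement; it imports it from a sequence of lemmas of Brandst\"adt and Kratsch on prime $(P_5$, gem$)$-free graphs, i.e., the complements. A self-contained proof along your lines would have to redo that analysis. Concretely, you would need to show which homogeneous set or clique cutset appears whenever $|V(G)|\ge 10$ and either $A\cup B\neq\emptyset$ or $C\cup D\cup Z$ contains a $3K_1$.
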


%

\begin{theorem}\label{thm:critical-house-free}
       For each $k\in \mathbb{N}$, there are finitely many $k$-vertex-critical (co-gem, house)-free graphs.
\end{theorem}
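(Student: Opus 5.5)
We follow the strategy used for \cref{thm:critical-dart-free} and invoke \cref{thm:clq-sklton} with $\mathcal{G}$ the hereditary class of (co-gem, house)-free graphs. It then suffices to show the following: for each $\ell\in[k]$, every $\ell$-vertex-critical blowup $H$ of a prime graph $G\in\mathcal{G}$ has a number of vertices bounded by a function of $\ell$. Fix such $G$ and $H$. Since $G$ is an induced subgraph of $H$, we have $\chi(G)\le \ell$. Moreover, $H$ is obtained from $G$ by substituting cliques, each of size at most $\ell$ because $\chi(H)=\ell$. So it is enough to bound $|V(G)|$, or to bound $|V(H)|$ directly.

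We apply \cref{thm:str-housecogem} to $G$ and treat its four outcomes in turn.

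\emph{$G$ is perfect.} Any blowup of a perfect graph is perfect, since substituting cliques preserves perfection by Lov\'asz's replication lemma. A vertex-critical perfect graph with chromatic number $\ell$ is $K_\ell$, so $|V(H)|=\ell$.

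\emph{$|V(G)|\le 9$.} Then $|V(H)|\le 9\ell$.

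\emph{$G$ is $3K_1$-free.} Then $H$ is also $3K_1$-free, because a stable set in a blowup meets each substituted clique at most once. Since $H$ has no $K_{\ell+1}$, we get $|V(H)|<R(3,\ell+1)$.

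\emph{$G$ has a clique cutset $Q$.} This is the case that needs a real argument. I would show that $H$ then inherits a clique cutset, which contradicts \cref{lem:critical-clqcut-comp}. Let $Q'\subseteq V(H)$ be the union of the cliques substituted for the vertices of $Q$. Because $Q$ is a clique in $G$ and adjacency between distinct substituted cliques copies adjacency in $G$, the set $Q'$ is a clique in $H$. Now take vertices $a,b$ of $G$ lying in different components of $G-Q$. Any vertices of $H$ coming from the blobs of $a$ and $b$ lie in different components of $H-Q'$, because a path between them in $H-Q'$ would project onto a walk from $a$ to $b$ in $G-Q$. Hence $Q'$ is a clique cutset of $H$. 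Note that $H$ is connected, being vertex-critical, and $G$ is connected, being prime with at least three vertices; so "more components" here simply means disconnected. This contradicts \cref{lem:critical-clqcut-comp}, so this case does not occur.

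The main obstacle I anticipate is justifying the clique-cutset case cleanly for iterated substitutions. However, "blowup" amounts to replacing each vertex by a nonempty clique, with complete or anticomplete adjacency between clique-pairs according to $G$, so the projection argument above goes through. Combining the four cases gives the hypothesis of \cref{thm:clq-sklton}, and the theorem follows.
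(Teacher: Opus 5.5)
Your proposal is correct and follows essentially the same route as the paper. You reduce via \cref{thm:clq-sklton} to $\ell$-vertex-critical blowups $H$ of prime graphs $G$, rule out the clique-cutset outcome of \cref{thm:str-housecogem} because it lifts to a clique cutset of $H$ (contradicting \cref{lem:critical-clqcut-comp}), and bound the other three outcomes by $\ell$, $9\ell$, and $R(3,\ell+1)$. The only difference is that you spell out two details the paper leaves implicit: the projection argument showing the clique cutset lifts to $H$, and the replication-lemma justification that a perfect $H$ must be $K_\ell$.
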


\begin{proof}[Proof of Theorem \ref{thm:critical-house-free}]
    Let $\mathcal{G}$ be the class of $($co-gem, house$)$-free graphs, and let $k\in \mathbb{N}$.   We prove that   the number of vertices in a $k$-vertex-critical  graph in $\cal G$ is bounded above by a function of $k$. From Theorem \ref{thm:clq-sklton}, it is sufficient to prove that the number of vertices in each $\ell$-vertex-critical blowup of every prime graph in $\mathcal{G}$ is bounded above by a function of $\ell$, for each $\ell\in [k]$.

    Let $G\in \cal G$ be a prime graph, and let $\ell\in [k]$.  Consider an arbitrary $\ell$-vertex-critical blowup of $G$, say $H$. If $G$ has a clique cutset, then $H$ also has a clique cutset which is a contradiction to \cref{lem:critical-clqcut-comp}; so we may assume that $G$ does not contain a  clique cutset. Now we apply \cref{thm:str-housecogem}. If $G$ is perfect, then  $H$ is the complete graph $K_{\ell}$ and hence $H$ has $\ell$ vertices,  and if $G$ has  at most nine vertices, then $H$ has at most $9\ell$ vertices, and we are done. Next if $G$ is 3$K_1$-free, then since any blowup of a 3$K_1$-free graph is also 3$K_1$-free, the number of vertices in $H$ is bounded above by the Ramsey number $R(3,\ell+1)$.  This proves \cref{thm:critical-house-free}.
\end{proof}

From \cref{thm:critical-house-free} and from \cref{kcol-ptime}, we have the following.

    \begin{corollary}
        For each $k\in \mathbb{N}$, there is a polynomial-time certifying algorithm  for $k$-{\sc Coloring} of (co-gem, house)-free graphs.
    \end{corollary}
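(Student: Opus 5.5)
The plan is to combine \cref{thm:critical-house-free} with \cref{kcol-ptime}, in the same way as the corresponding corollaries for the paraglider and dart cases. Fix $k\in\mathbb{N}$ and let $\mathcal{G}$ be the class of (co-gem, house)-free graphs.

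\textbf{Step 1: the finite list.} Apply \cref{thm:critical-house-free} with $k+1$ in place of $k$. It says that the set $\mathcal{L}_{k+1}$ of $(k+1)$-vertex-critical graphs in $\mathcal{G}$ is finite. Hence there is a constant $N_k$, depending only on $k$, with $|V(F)|\le N_k$ for every $F\in\mathcal{L}_{k+1}$. The algorithm treats this finite list as hard-wired data.

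\textbf{Step 2: the algorithm.} Given $G\in\mathcal{G}$, first run the polynomial-time algorithm of \cref{kcol-ptime}, since $G$ is co-gem-free.
\begin{itemize}
\item If it reports that $G$ is $k$-colorable, we need an explicit $k$-coloring as a certificate. If that algorithm returns only a yes/no answer, use standard self-reducibility. Repeatedly pick a nonadjacent pair $u,v$ and identify them into one vertex whenever the resulting graph is still $k$-colorable. Stop when the current graph is complete on at most $k$ vertices, which gives the coloring.
\item If it reports that $G$ is not $k$-colorable, then $G$ contains some $(k+1)$-vertex-critical induced subgraph. This subgraph lies in $\mathcal{G}$ because the class is hereditary, so it belongs to $\mathcal{L}_{k+1}$. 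We find one by checking every vertex subset of size at most $N_k$ against the list. This takes $O(|V(G)|^{N_k})$ time, which is polynomial since $k$ is fixed. Both certificates can then be verified in polynomial time.
\end{itemize}

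\textbf{Step 3: the main obstacle.} The only delicate point is the self-reduction in the yes-case. Identifying two nonadjacent vertices can create an induced co-gem or house, so the merged graph may leave $\mathcal{G}$, and \cref{kcol-ptime} would no longer apply to it.

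The first remedy to try is to stay inside induced subgraphs of $G$, which keeps us in $\mathcal{G}$. Proceed one vertex at a time, and use \cref{kcol-ptime} to test whether a partial precoloring can be extended. This works if that algorithm handles list or precoloring constraints, as such algorithms typically do.

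If it does not, build the coloring directly, which sidesteps the issue. Repeatedly pick a vertex $v$ of the current graph $G'$ and test whether $G'-v$ is $k$-colorable. Deleting $v$ keeps us inside induced subgraphs of $G$, and every graph in which no single vertex can be deleted while staying $k$-colorable is bounded via \cref{thm:critical-house-free}. The details of this fallback still need to be worked out.

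In either case the argument reduces to the statement of \cref{thm:finite-imp-P}, now made certifying, together with the finiteness from \cref{thm:critical-house-free}.
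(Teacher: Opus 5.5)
Your proposal is correct and follows the paper's route: the paper simply combines \cref{thm:critical-house-free} (applied to $k+1$, which yields the critical-subgraph certificate by brute force over subsets of bounded size) with \cref{kcol-ptime}. Your Step 3 concern is settled by your first remedy, because the cited result of Couturier et al.\ is for List $k$-{\sc Coloring} of $(P_5+rP_1)$-free graphs, so colors can be fixed vertex by vertex without leaving the class; the vertex-deletion fallback is unnecessary and would not produce a coloring anyway, since every induced subgraph of a $k$-colorable graph is $k$-colorable.
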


\medskip
\noindent{\bf Acknowledgement}.~The authors sincerely thank  Ch\'inh T. Ho\`ang for the  initial  discussions, and Ben Cameron for his comments and suggestions on the preliminary version of this paper.

\end{document}